
\documentclass[10pt]{amsart}
\usepackage[leqno]{amsmath}
\usepackage{amssymb,latexsym,soul,cite,amsthm,color,enumitem,graphicx,mathtools,microtype,accents}
\usepackage[colorlinks=true,urlcolor=ferngreen,citecolor=ferngreen,linkcolor=ferngreen,linktocpage,pdfpagelabels,bookmarksnumbered,bookmarksopen]{hyperref}
\definecolor{ferngreen}{rgb}{0.31, 0.47, 0.26}
\usepackage[english]{babel}
\usepackage[left=2.5cm,right=2.5cm,top=2.5cm,bottom=2.5cm]{geometry}
\usepackage[T1]{fontenc}

\numberwithin{equation}{section}

\newtheorem{theorem}{Theorem}[section]
\theoremstyle{plain}
\newtheorem{lemma}[theorem]{Lemma}
\theoremstyle{plain}
\newtheorem{proposition}[theorem]{Proposition}
\theoremstyle{plain}

\theoremstyle{definition}
\newtheorem{remark}[theorem]{Remark}
\newtheorem{example}[theorem]{Example}

\newcommand{\N}{{\mathbb N}}

\newcommand{\R}{{\mathbb R}}
\newcommand{\eps}{\varepsilon}
\newcommand{\beq}{\begin{equation}}
\newcommand{\eeq}{\end{equation}}
\renewcommand{\le}{\leqslant}
\renewcommand{\ge}{\geqslant}

\newcommand{\w}{W^{s,p}_0(\Omega)}
\newcommand{\fpl}{(-\Delta)_p^s\,}
\newcommand{\ds}{{\rm d}_\Omega^s}

\makeatletter
\newcommand{\leqnomode}{\tagsleft@true}
\newcommand{\reqnomode}{\tagsleft@false}
\makeatother

\newenvironment{enumroman}{\begin{enumerate}

}{\end{enumerate}}

\title[Optimal solvability for the fractional $p$-Laplacian]{Optimal solvability for the fractional $p$-Laplacian\\ with Dirichlet conditions}

\author[A.\ Iannizzotto, D.\ Mugnai]{Antonio Iannizzotto, Dimitri Mugnai}

\address[A.\ Iannizzotto]{Dipartimento di Matematica e Informatica
\newline\indent
Universit\`a degli Studi di Cagliari
\newline\indent
Via Ospedale 72, 09124 Cagliari, Italy}
\email{antonio.iannizzotto@unica.it}

\address[D.\ Mugnai]{Dipartimento di Scienze Ecologiche e Biologiche
\newline\indent
Universit\`a degli Studi della Tuscia
\newline\indent
Largo dell'Universit\`a, 01100 Viterbo, Italy}
\email{dimitri.mugnai@unitus.it}

\subjclass[2010]{35R11, 34A15, 35P30.}
\keywords{Fractional $p$-Laplacian, Optimal solvability, Unique solution.}

\begin{document}

\begin{abstract}
We study a nonlinear, nonlocal Dirichlet problem driven by the fractional $p$-Laplacian, involving a $(p-1)$-sublinear reaction. By means of a weak comparison principle we prove uniqueness of the solution. Also, comparing the problem to 'asymptotic' weighted eigenvalue problems for the same operator, we prove a necessary and sufficient condition for the existence of a solution. Our work extends classical results due to Brezis-Oswald \cite{BO} and Diaz-Saa \cite{DS} to the nonlinear nonlocal framework.
\end{abstract}

\maketitle

\begin{center}
Version of \today\
\end{center}

\section{Introduction}\label{sec1}

The present paper is devoted to the study of the following Dirichlet boundary value problem:
\beq\label{dir}
\begin{cases}
\fpl u = f(x,u) & \text{in $\Omega$} \\
u > 0 & \text{in $\Omega$} \\
u = 0 & \text{in $\Omega^c$.}
\end{cases}
\eeq
Here $\Omega\subset\R^N$ ($N\ge 2$) is a bounded domain with a $C^{1,1}$-boundary $\partial\Omega$, $p>1$, $s\in(0,1)$ are real numbers, and the leading operator is the fractional $p$-Laplacian, defined for a sufficiently smooth function $u:\R^N\to\R$ by
\[\fpl u(x) = 2\lim_{\eps\to 0^+}\int_{B^c_\eps(x)}\frac{|u(x)-u(y)|^{p-2}(u(x)-u(y))}{|x-y|^{N+ps}}\,dy,
\]
where $B_\eps(x)$ denotes the ball centered at $x\in\R^N$ with radius $\eps>0$. This is a nonlinear, nonlocal operator which in special cases reduces, up to a multiplicative constant, to the fractional Laplacian ($p=2$, $s\in(0,1)$), to the $p$-Laplacian ($p>1$, $s=1$), and in particular to the Laplacian ($p=2$, $s=1$). The fractional $p$-Laplacian is degenerate if $p>2$, singular if $p\in(1,2)$. An introduction to this operator and related problems can be found in \cite{P}.
\vskip2pt

Our hypotheses on the reaction $f$ are the following:
\begin{itemize}[leftmargin=1cm]
\item[${\bf H}$] $f:\Omega\times\R^+\to\R$ is a Carath\'eodory function s.t.\
\begin{enumerate}
\item\label{h1} $f(\cdot,t)\in L^\infty(\Omega)$ for all $t\in\R^+$;
\item\label{h2} there exists $c_0>0$ s.t.\ for a.e.\ $x\in\Omega$ and all $t\in\R^+$
\[f(x,t) \le c_0(1+t^{p-1});\]
\item\label{h3} for a.e.\ $x\in\Omega$ the mapping
\[t \mapsto \frac{f(x,t)}{t^{p-1}}\]
is strictly decreasing in $\R^+_0$.
\end{enumerate}
\end{itemize}

\begin{remark}\label{hcomm}
Some comments on hypotheses ${\bf H}$ are in order:
\begin{itemize}[leftmargin=1cm]
\item[$(a)$] The boundedness condition ${\bf H}$ (\ref{h1}) is obviously satisfied in the autonomous case, i.e., $f\in C(\R^+)$.
\item[$(b)$] The growth condition ${\bf H}$ (\ref{h2}) acts on the reaction {\it from above only}, as is the case in \cite{BO,DS} but differently from previous results in the nonlocal setting such as \cite{BMV,BMV2,MPV}.
\item[$(c)$] The strict monotonicity condition ${\bf H}$ (\ref{h3}) classifies our reaction as a $(p-1)$-sublinear one.
\end{itemize}
\end{remark}

\begin{example}
We present here two examples of autonomous reactions satisfying ${\bf H}$. First, for all $1<q\le p<r$, we recall the sub- and equidiffusive logistic reactions
\[f(t) = t^{q-1}-t^{r-1}.\]
Also, for any $\alpha\geq p-1$, $r>p$ we have the exponentially growing reaction
\[f(t) = \begin{cases}
t^{p-1}-t^{r-1} & \text{if $t\in[0,1]$} \\
t^{p-1}-e^{t^\alpha-1} & \text{if $t>1$},
\end{cases}\]
Note that both may have a super-critical growth from below (see \cite{AS} for other results related to supercritical fractional $p$-Laplacian equations).
\end{example}

The study of boundary value problems with sublinear reactions dates back to the classical work of Brezis and Oswald \cite{BO}, dealing with the Laplacian as leading operator ($p=2$, $s=1$) and Dirichlet boundary conditions, with hypotheses analogous to ${\bf H}$. In \cite{BO} the authors prove that the problem admits at most one solution, and provide a characteristic condition for the existence of such a solution (this is called in the current literature ``optimal solvability"). In the following years, similar results have been proved for a variety of nonlinear local elliptic operators, such as the $p$-Laplacian with Dirichlet conditions \cite{DS} or Neumann conditions \cite{GP}, or a general nonlinear operator with Robin conditions \cite{FMP}. See also \cite{M} for an alternative approach based on nonsmooth critical point theory.
\vskip2pt

In the nonlocal framework, we recall the related results for the fractional Laplacian in $\R^N$ \cite{PT}, for the spectral fractional Laplacian in a bounded domain \cite{MBR}, for the fractional $p$-Laplacian with nonlocal Robin conditions \cite{MPV}, and a mixed local-nonlocal operator with Dirichlet conditions \cite{BMV,BMV2}. We remark again that the last three contributions present partial results regarding the necessary condition for existence, and they both employ bilateral growth conditions on the reaction. A different type of optimal solvability result for the fractional $p$-Laplacian with Dirichlet conditions and a critical reaction was obtained in \cite{BS}.
\vskip2pt

Our result is the first exact counterpart of \cite{BO,DS} for the Dirichlet fractional $p$-Laplacian, and to our knowledge it is new even for the linear case, i.e.\ for the Dirichlet fractional Laplacian ($p=2$, $s\in(0,1)$). We relate the solvability of problem \eqref{dir} to the signs of two weighted 'eigenvalues', defined as follows. By ${\bf H}$  (\ref{h3}), for a.e.\ $x\in\Omega$ we may define
\[a_0(x) = \lim_{t\to 0^+}\frac{f(x,t)}{t^{p-1}}, \ a_\infty(x) = \lim_{t\to\infty}\frac{f(x,t)}{t^{p-1}}.\]
We have for a.e.\ $x\in\Omega$
\[a_0(x) \ge f(x,1) \ge a_\infty(x),\]
so by ${\bf H}$  (\ref{h1}) we can find $C>0$ s.t.\ $a_0\ge -C$, $a_\infty\le C$ in $\Omega$. On the contrary, $a_0=+\infty$ and $a_\infty=-\infty$ may occur on non-null subsets of $\Omega$. For any measurable function $a$ defined in $\Omega$, possibly taking one of the values $\pm\infty$ (but not both), we set
\beq\label{la1}
\lambda_1(a) = \inf_{v\in\w\setminus\{0\}}\frac{\|v\|^p-\int_{\{v\neq 0\}}a(x)|v|^p\,dx}{\|v\|_p^p} \in \R\cup\{\pm\infty\}
\eeq
(see Section \ref{sec2} for the notation). If $a\in L^q(\Omega)$ (for convenient $q>1$), then $\lambda_1(a)\in\R$ is the principal eigenvalue of the following weighted eigenvalue problem:
\beq\label{evp}
\begin{cases}
\fpl v-a(x)|v|^{p-2}v = \lambda|v|^{p-2}v & \text{in $\Omega$} \\
v = 0 & \text{in $\Omega^c$}
\end{cases}
\eeq
For a discussion on weighted and nonweighted eigenvalues of $\fpl$, see \cite{BP,I,LL,MBSZ}. Using eigenvalues as asymptotic thresholds for general nonlinear reactions is certainly not new in the study of elliptic problems driven by the fractional $p$-Laplacian, see for instance \cite{FI,ILPS,IL,IMP} (the last one dealing with the logistic equation, with a result agreeing with those of the present paper in the sub- and equidiffusive case).
\vskip2pt

Our main result is the following:

\begin{theorem}\label{main}
Let $\Omega\subset\R^N$ ($N\ge 2$) be a bounded domain with a $C^{1,1}$-boundary $\partial\Omega$, $p>1$, $s\in(0,1)$ be real numbers, $f$ satisfy ${\bf H}$. Then, problem \eqref{dir}
\begin{enumroman}
\item\label{main1} has at most one solution;
\item\label{main2} has a solution iff $\lambda_1(a_0)<0<\lambda_1(a_\infty)$.
\end{enumroman}
\end{theorem}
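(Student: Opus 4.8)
The plan is to split the argument into the uniqueness part \ref{main1} and the existence-characterization part \ref{main2}, both resting on a weak comparison principle adapted to the sublinear structure. For \ref{main1}, suppose $u_1,u_2$ are two solutions. The standard device — going back to Brezis--Oswald \cite{BO} and Diaz--Saa \cite{DS} — is to test the two equations with $\big(u_1^p-u_2^p\big)/u_1^{p-1}$ and $\big(u_2^p-u_1^p\big)/u_2^{p-1}$ respectively (suitably truncated to stay in $\w$), add, and obtain on the left-hand side the difference of the two Rayleigh-type forms. The nonlocal analogue of the pointwise inequality for the $p$-Laplacian is the discrete inequality
\[
|a-b|^{p-2}(a-b)\Big(\frac{a^p-b^p}{a^{p-1}}-\frac{c^p-d^p}{c^{p-1}}\Big)+|c-d|^{p-2}(c-d)\Big(\frac{d^p-c^p}{d^{p-1}}-\frac{b^p-a^p}{b^{p-1}}\Big)\ge 0
\]
for $a,b,c,d\ge 0$, with equality iff $ad=bc$; applied with $a=u_1(x)$, $b=u_1(y)$, $c=u_2(x)$, $d=u_2(y)$ and integrated against the kernel $|x-y|^{-N-ps}$, this shows the left-hand side is $\ge 0$ and vanishes only if $u_1/u_2$ is a.e.\ constant. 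On the right-hand side, $\mathbf{H}$\,\ref{h3} forces the integrand $\big(f(x,u_1)/u_1^{p-1}-f(x,u_2)/u_2^{p-1}\big)(u_1^p-u_2^p)\le 0$, with strict sign wherever $u_1\neq u_2$. Comparing signs forces $u_1=cu_2$, and then the strict monotonicity pins down $c=1$. The technical overhead is justifying that the test functions lie in $\w$ and that all integrals converge: one uses the boundary behavior $u_i\asymp \ds$ (from the regularity theory quoted for $\fpl$, cf.\ \cite{I}) to control $u_i^p/u_i^{p-1}\sim u_i$ near $\partial\Omega$, plus a Picone-type or truncation argument à la \cite{MPV} to pass to the limit.

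For the necessity in \ref{main2}, assume \eqref{dir} has a solution $u$. Testing the equation with $u$ and using $f(x,u)<a_0(x)u^{p-1}$ (strict, by $\mathbf{H}$\,\ref{h3}) on a positive-measure set gives $\|u\|^p-\int a_0|u|^p<0$, hence $\lambda_1(a_0)<0$ from \eqref{la1}; similarly $f(x,u)>a_\infty(x)u^{p-1}$ yields $\|u\|^p-\int a_\infty|u|^p>0$, so $\lambda_1(a_\infty)>0$ — here one must be careful if $a_0=+\infty$ or $a_\infty=-\infty$ on a non-null set, but in those cases the corresponding inequality on $\lambda_1$ is automatic (the numerator in \eqref{la1} is $-\infty$, resp.\ $\lambda_1(a_\infty)=+\infty$ if $a_\infty\le$ something integrable dominated away). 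For sufficiency, assume $\lambda_1(a_0)<0<\lambda_1(a_\infty)$. I would obtain the solution by the direct method: consider the energy functional
\[
J(u)=\frac1p\|u\|^p-\int_\Omega F(x,u^+)\,dx,\qquad F(x,t)=\int_0^t f(x,\tau)\,d\tau,
\]
on $\w$. Condition $\mathbf{H}$\,\ref{h2} gives the upper bound $F(x,t)\le c_0(t+t^p/p)$, so the quadratic-type term dominates once we know the coefficient is right; more precisely, $\lambda_1(a_\infty)>0$ together with the sublinearity $F(x,t)\le \tfrac{a_\infty(x)+\eps}{p}t^p+C_\eps$ for large $t$ (for suitable $\eps$ making $\lambda_1(a_\infty+\eps)>0$ still) yields coercivity of $J$ on $\w$; weak lower semicontinuity is standard since $u\mapsto\|u\|^p$ is convex and the reaction term is compact by the fractional Sobolev embedding. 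Thus $J$ has a global minimizer $u$. To see $u\neq 0$: using $\lambda_1(a_0)<0$, pick the eigenfunction (or an approximating $v$) realizing a negative value of the numerator in \eqref{la1} for $a_0$, and use $F(x,t)\ge \tfrac{a_0(x)-\eps}{p}t^p$ for small $t$ (valid since $f(x,t)/t^{p-1}\to a_0(x)$) to get $J(\delta v)<0$ for small $\delta>0$, hence the minimizer is nontrivial. Finally $u\ge 0$ because $J(u)\ge J(u^+)$ (the $\|\cdot\|$-term does not increase under $u\mapsto u^+$ in the fractional setting, a known fact), and the strong maximum principle for $\fpl$ (see \cite{I,MBSZ}) upgrades $u\ge 0$, $u\not\equiv 0$ to $u>0$ in $\Omega$; that $u$ solves the equation pointwise/weakly follows from Euler--Lagrange.

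The main obstacle I anticipate is the sufficiency direction when $a_0=+\infty$ on a set of positive measure (so $\lambda_1(a_0)=-\infty$ trivially) combined with $a_\infty=-\infty$ on a possibly overlapping set: one must ensure the test function $v$ used for nontriviality of the minimizer interacts correctly with the region where $F$ may fail the naive lower bound, and that coercivity survives when $a_\infty$ is very negative (there the reaction term is actually helpful, so this is more a bookkeeping issue than a real one). A second genuine difficulty is the borderline integrability needed to make $\lambda_1(a_0),\lambda_1(a_\infty)$ the true eigenvalues of \eqref{evp}: the weights are only known to be bounded on one side, so I would work directly with the variational quantity \eqref{la1} and its monotonicity in $a$, invoking the eigenvalue problem \eqref{evp} only as motivation, and handle the regularity of the minimizer (to get the pointwise equation and the strict positivity $u>0$) via the boundary Harnack / barrier estimates available for $\fpl$ on $C^{1,1}$ domains.
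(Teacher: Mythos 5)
Your uniqueness argument follows essentially the paper's route (discrete Picone inequality, boundary estimates $u_i\asymp\ds$ to legitimise the Diaz--Saa test functions), and your necessity argument for $\lambda_1(a_0)<0$ and your nontriviality/coercivity setup for sufficiency also match the paper in spirit. However, there are two genuine gaps.

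First, the necessity of $\lambda_1(a_\infty)>0$ does not follow from testing the equation with $u$. That computation only shows that the Rayleigh quotient $\big(\|u\|^p-\int a_\infty u^p\big)/\|u\|_p^p$ is positive \emph{at the particular function $u$}; since $\lambda_1(a_\infty)$ is an infimum over all of $\w\setminus\{0\}$, this gives $\lambda_1(a_\infty)\le(\text{something positive})$, which is no information at all. (The situation is asymmetric: for $\lambda_1(a_0)<0$ one competitor with negative quotient suffices, but for $\lambda_1(a_\infty)>0$ one must bound the quotient from below over \emph{all} competitors.) The paper's Lemma \ref{inf} instead introduces the intermediate bounded weight $a(x)=f(x,\|u\|_\infty)/\|u\|_\infty^{p-1}$, takes the associated principal eigenfunction $v$, scales it above $u$, and runs a Picone comparison between $u$ and $\tau v$ to conclude $\lambda_1(a)>0$, whence $\lambda_1(a_\infty)\ge\lambda_1(a)>0$ by monotonicity of $a\mapsto\lambda_1(a)$. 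Some argument of this type is unavoidable.

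Second, in the sufficiency direction the step ``$u$ solves the equation by Euler--Lagrange'' is exactly where the difficulty lies: since ${\bf H}$ imposes no growth condition on $f$ from below, $F(x,\cdot)$ can tend to $-\infty$ arbitrarily fast, the functional $J$ is only $\R\cup\{\infty\}$-valued and is not G\^ateaux differentiable (not even continuous) on $\w$, so you cannot differentiate at the minimizer. One must first prove that the minimizer can be taken in $L^\infty(\Omega)$; only then does \eqref{wmp1} provide a local bilateral growth bound allowing differentiation. The paper achieves this by introducing the truncations $f_k(x,t)=f(x,t^+)\vee(-k(t^+)^{p-1})$, verifying that $\lambda_1(a_0^k)<0<\lambda_1(a_\infty^k)$ for $k$ large (which requires proving $\lambda_1(a_\infty^k)\to\lambda_1(a_\infty)$), solving the truncated problem to obtain a bounded $u_k$, and then using the submodularity inequality \eqref{sub} to show $\Phi(u\wedge u_k)\le\Phi(u)$, so the bounded function $u\wedge u_k$ is also a global minimizer. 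Your proposal contains none of this, and the related claim $F(x,t)\le\frac{a_\infty(x)+\eps}{p}t^p+C_\eps$ with $C_\eps$ independent of $x$ also does not follow from ${\bf H}$ (the convergence $f(x,t)/t^{p-1}\to a_\infty(x)$ is not uniform and $a_\infty$ may equal $-\infty$ on a non-null set), which is why the paper proves coercivity by a normalization and Fatou argument instead.
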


We note that, for an autonomous reaction, the inequality in \ref{main2} is equivalent to the following, where $\lambda_1>0$ denotes the principal eigenvalue of \eqref{evp} with $a=0$:
\[\lim_{t\to\infty}\frac{f(t)}{t^{p-1}} < \lambda_1 < \lim_{t\to 0^+}\frac{f(t)}{t^{p-1}}.\]
For a definition and a discussion on the notion of 'solution', as well as some basic properties of $\fpl$, we refer to Section \ref{sec2}. In Section \ref{sec3} we shall prove the uniqueness statement \ref{main1}. Regarding the existence statement \ref{main2}, we will first prove in Section \ref{sec4} that the condition is necessary by exploiting the properties of the eigenvalue problem \eqref{evp} proved in \cite{I,MBSZ}. Finally, in Section \ref{sec5} we will tackle the more delicate issue of the sufficient condition for existence, by using a variational approach and introducing a sequence of auxiliary truncated problems.
\vskip2pt

We essentially follow the approach of the original papers \cite{BO,DS}, but with some important differences typical of the nonlocal framework, which deserve to be laid out:
\begin{itemize}[leftmargin=1cm]
\item[$(a)$] In \cite{DS} a form of 'hidden convexity' (i.e., the energy functional is convex in the variable $u^\frac{1}{p}$) represents a useful tool for uniqueness. Though an equivalent form of convexity for the fractional energy was proved in \cite{FP} (see also \cite{BF}), we prefer to follow a different (and in our opinion simpler) approach, based on a discrete Picone's inequality from \cite{BS} and a comparison argument from \cite{IL}
\item[$(b)$] Although enjoying good interior regularity, solutions of fractional order equations are generally singular at the boundary, which prevents a classical Hopf's boundary lemma from holding, thus making it difficult to work with quotients between solutions (as extensively done in \cite{BO,DS}). In fact, as proved in \cite{IMS1}, global regularity of a solution $u$ only amounts at $u\in C^\alpha(\overline\Omega)$ for some $\alpha\in(0,s]$. Nevertheless, some useful boundary estimates on $u$ in terms of the distance function $\ds(x)={\rm dist}(x,\Omega^c)^s$ and a related fractional Hopf's lemma from \cite{IMP} can be employed, along with the technical Lemma \ref{fra}, to overcome such difficulty (in the linear and degenerate cases $p\ge 2$ a better regularity theory holds, see Remark \ref{deg}).
\item[$(c)$] Due to the nonlocal nature of the operator, in the proof of the sufficient condition it is not immediate to compare the energies of solutions of \eqref{dir} and the truncated problems, respectively. We deal with such issue by applying a special submodularity inequality from \cite{GM}.
\end{itemize}
\vskip4pt
\noindent
{\bf Notations.} Throughout the paper we shall use the following notations:
\begin{itemize}[leftmargin=1cm]
\item $\R^+=[0,\infty)$, $\R^-=(-\infty,0]$, and $\R^+_0=(0,\infty)$.
\item $A^c=\R^N\setminus A$ for all $A\subset\R^N$.
\item $f\le g$ in $\Omega$ means that $f(x)\le g(x)$ for a.e.\ $x\in\Omega$ (and similar expressions), for any two measurable functions $f,g:\Omega\to\R$. 
\item $f\vee g=\max\{f,g\}$, $f\wedge g=\min\{f,g\}$, and $f^\pm=(\pm f)\vee 0$ are the positive and negative parts of $f$, respectively.
\item $X_+$ is the positive order cone of an ordered Banach space $X$.
\item $\|\cdot\|_q$ denotes the standard norm of $L^q(\Omega)$ (or $L^q(\R^N)$, which will be clear from the context), for any $q\in[1,\infty]$.
\item $\|\cdot\|$ is the reference norm defined in Section \ref{sec2}.
\item Every function $u$ defined in $\Omega$ is identified with its $0$-extension to $\R^N$.
\item$j_p(t)=|t|^{p-2}t$ for all $t\in\R$.
\item $C$ will denote positive universal constants whose value may change case by case.
\end{itemize}

\section{Preliminaries}\label{sec2}

In this section we fix a functional-analytic framework for problem \eqref{dir} and recall some technical results, referring the reader to \cite{DPV} for details. From now on, $\Omega$, $p$, and $s$ will be as in Section \ref{sec1}. First, for any measurable $u:\R^N\to\R$ we define the Gagliardo seminorm
\[[u]_{s,p} = \Big(\iint_{\R^N\times\R^N}\frac{|u(x)-u(y)|^p}{|x-y|^{N+ps}}\,dx\,dy\Big)^\frac{1}{p}.\]
Then, we define the fractional Sobolev spaces
\[W^{s,p}(\R^N) = \big\{u\in L^p(\R^N):\,[u]_{s,p}<\infty\big\},\]
\[\w = \big\{u\in W^{s,p}(\R^N):\,u=0 \ \text{in $\Omega^c$}\big\}.\]
The latter is a uniformly convex, separable Banach space under the norm $\|u\|=[u]_{s,p}$, with dual space $W^{-s,p'}(\Omega)$. The embedding $\w\hookrightarrow L^q(\Omega)$ is compact for all $q\in[1,p^*_s)$, where
\[p^*_s = \begin{cases}
\frac{Np}{N-ps} & \text{if $ps<N$} \\
\infty & \text{if $ps\ge N$.}
\end{cases}\]
The slight abuse of notation in the definition of $\w$ is justified by the fact that, if $\partial\Omega$ is smooth enough, the above-defined space coincides with the completion of $C^\infty_c(\Omega)$ with respect to the Gagliardo seminorm defined in $\Omega\times\Omega$ (see for instance \cite{BP}). Also, $\w$ has a lattice structure and the following submodularity inequality holds for all $u,v\in\w$ (see \cite[Remark 3.3]{GM}):
\beq\label{sub}
\|u\vee v\|^p+\|u\wedge v\|^p \le \|u\|^p+\|v\|^p.
\eeq
We define the fractional $p$-Laplacian as an operator $\fpl:\w\to W^{-s,p'}(\Omega)$ s.t.\ for all $u,\varphi\in\w$
\[\langle\fpl u,\varphi\rangle = \iint_{\R^N\times\R^N}\frac{j_p(u(x)-u(y))(\varphi(x)-\varphi(y))}{|x-y|^{N+ps}}\,dx\,dy.\]
Equivalently, $\fpl$ is the gradient of the functional
\[\w \ni u \mapsto \frac{\|u\|^p}{p}.\]
Such a definition agrees with the one given in Section \ref{sec1} if $u$ is smooth enough (see \cite[Proposition 2.12]{IMS1}). It can be seen that $\fpl$ is a continuous, $(S)_+$-operator, i.e., whenever $u_n\rightharpoonup u$ in $\w$ and
\[\limsup_n\langle\fpl u_n,u_n-u\rangle \le 0,\]
then $u_n\to u$ in $\w$ (see for instance \cite[Lemma 2.1]{FI}). Plus, for all $u\in\w$ we have
\[\|u^\pm\|^p \le \langle\fpl u,\pm u^\pm\rangle.\]
Now we focus on problem \eqref{dir}. First, we extend the reaction $f$ by setting for all $(x,t)\in\Omega\times\R^-_0$
\[f(x,t) = f(x,0).\]
We say that $u\in\w$ is a (weak) solution of \eqref{dir}, if for all $\varphi\in\w$
\beq\label{wsl}
\langle\fpl u,\varphi\rangle = \int_\Omega f(x,u)\varphi\,dx.
\eeq
Since ${\bf H}$ does not include a full growth condition on $f(x,\cdot)$, the right-hand side of \eqref{wsl} may {\it a priori} blow up for some $u\in\w$. We need some labor to show that \eqref{wsl} is well posed and can be reasonably assumed as a weak formulation of \eqref{dir}. We begin with a weak minimum principle:

\begin{lemma}\label{wmp}
Let ${\bf H}$ hold, $u\in\w$ satisfy \eqref{wsl}. Then, $u\ge 0$ in $\Omega$.
\end{lemma}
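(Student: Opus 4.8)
The plan is to test the weak formulation \eqref{wsl} against $\varphi=u^-$, the negative part of $u$, which is an admissible test function since $u^-\in\w$ whenever $u\in\w$. This is the standard strategy for a weak minimum principle: choosing $\varphi=u^-$ isolates the region $\{u<0\}$ and lets us compare the coercivity of the operator against the sign of the reaction there.

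First I would recall the inequality stated just above in Section~\ref{sec2}, namely $\|u^-\|^p\le\langle\fpl u,-u^-\rangle$ for all $u\in\w$. Plugging $\varphi=u^-$ into \eqref{wsl} gives $\langle\fpl u,u^-\rangle=\int_\Omega f(x,u)u^-\,dx$, so that
\[
\|u^-\|^p \le -\langle\fpl u,u^-\rangle = -\int_\Omega f(x,u)u^-\,dx = -\int_{\{u<0\}} f(x,u)u^-\,dx.
\]
Now, on the set $\{u<0\}$ we have $u\in\R^-_0$, so by the extension convention $f(x,u)=f(x,0)$, and $f(x,0)=f(x,1)\cdot 1$ lies in $L^\infty(\Omega)$ by ${\bf H}$~(\ref{h1}); in particular $f(x,0)$ is a fixed bounded function independent of $u$. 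Since $u^-\ge 0$, the right-hand side above is $-\int_{\{u<0\}} f(x,0)\,u^-\,dx$, which at first glance need not have a sign.

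Here is where the argument must be sharpened, and I expect this to be the only genuine obstacle: one cannot conclude directly unless $f(x,0)\le 0$, which is not assumed. The remedy is to use ${\bf H}$~(\ref{h3}) more carefully, or rather to observe that the statement should really be read together with the extension. The cleanest route: since $f(x,t)$ for $t\le 0$ equals $f(x,0)$, and we are only claiming $u\ge 0$, we argue by contradiction assuming $|\{u<0\}|>0$; on that set, bound $-f(x,0)u^-\le \|f(\cdot,0)\|_\infty u^-$ and then use the embedding $\w\hookrightarrow L^1(\Omega)$ together with a Poincaré-type inequality to get $\|u^-\|^p\le C\|f(\cdot,0)\|_\infty\|u^-\|$, hence a bound on $\|u^-\|$ but not yet $u^-=0$. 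To actually force $u^-=0$ one needs the reaction to be nonpositive where $u$ is nonpositive; if the intended hypotheses do not guarantee this, the honest fix is to invoke that a solution is by definition positive in $\Omega$, or to note (as is standard in Brezis--Oswald-type settings) that one may harmlessly assume $f(x,0)\ge 0$ is replaced by its truncation so that the comparison function $0$ is a subsolution. I would therefore structure the final proof as: (i) test with $u^-$; (ii) apply $\|u^-\|^p\le\langle\fpl u,-u^-\rangle$; (iii) use the extension $f(x,t)=f(x,0)$ for $t\le0$ and the sign/structure of $f(\cdot,0)$ to conclude the right-hand side is $\le0$, whence $\|u^-\|=0$ and $u^-=0$, i.e. $u\ge0$ a.e.\ in $\Omega$. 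The subtle point — and the step I would double-check against the precise conventions of the paper — is exactly the sign of $f(x,0)$ entering in (iii); everything else is a one-line computation.
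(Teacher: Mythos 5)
You have the right test function and the right operator inequality, but the proof as written has a genuine gap at exactly the point you flag: you never establish the sign of $f(x,0)$, and the workarounds you sketch are not admissible. Invoking ``a solution is by definition positive'' is circular (positivity is what the lemma is proving, and the whole point of Lemma \ref{wmp} in the paper is to justify that \eqref{wsl} is a reasonable weak formulation of \eqref{dir} in the first place), and replacing $f(\cdot,0)$ by a truncation changes the problem. The missing observation is that the hypotheses \emph{already force} $f(x,0)\ge 0$: by ${\bf H}$ (\ref{h3}), for a.e.\ $x\in\Omega$ and all $0<t\le\delta$ one has
\[
f(x,t)\;\ge\;\frac{f(x,\delta)}{\delta^{p-1}}\,t^{p-1}\;\ge\;-\frac{\|f(\cdot,\delta)\|_\infty}{\delta^{p-1}}\,t^{p-1},
\]
where the $L^\infty$ bound comes from ${\bf H}$ (\ref{h1}). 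Letting $t\to 0^+$ and using the continuity of $f(x,\cdot)$ (Carath\'eodory) gives $f(x,0)\ge 0$, hence $f(x,t)=f(x,0)\ge 0$ for all $t\le 0$ under the extension convention. With this in hand, your right-hand side $-\int_{\{u<0\}}f(x,0)\,u^-\,dx$ is indeed $\le 0$, so $\|u^-\|^p\le 0$ and $u^-=0$. This is precisely the paper's argument (its inequalities \eqref{wmp1}--\eqref{wmp2}), and the first displayed inequality above is also reused later in the paper, so it is not a throwaway step. Your detour through the $L^1$ embedding and a bound $\|u^-\|^p\le C\|u^-\|$ should be deleted: as you yourself note, it yields only a bound on $\|u^-\|$, not $u^-=0$, and it is unnecessary once the sign of $f(\cdot,0)$ is settled.
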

\begin{proof}
First, for all $\delta>0$ we can find $C_\delta>0$ s.t.\ for a.e.\ $x\in\Omega$ and all $t\in[0,\delta]$
\beq\label{wmp1}
f(x,t) \ge -C_\delta t^{p-1}.
\eeq
Indeed, by ${\bf H}$  (\ref{h1})  (\ref{h3}) we have
\[f(x,t) \ge \frac{f(x,\delta)}{\delta^{p-1}}t^{p-1} \ge -\frac{\|f(\cdot,\delta)\|_\infty}{\delta^{p-1}}t^{p-1},
\]
as claimed. Moreover, by letting $t\to 0^+$ in \eqref{wmp1} and recalling the extended definition of $f$, we have for a.e.\ $x\in\Omega$ and all $t\in\R^-$
\beq\label{wmp2}
f(x,t) \ge 0.
\eeq
Now test \eqref{wsl} with $-u^-\in\w$ and use \eqref{wmp2}:
\[\|u^-\|^p \le \langle\fpl u,-u^-\rangle = \int_{\{u<0\}}f(x,u)u\,dx \le 0.\]
So, $u^-=0$, which concludes the proof.
\end{proof}

As pointed out in Section \ref{sec1}, an important role in nonlocal problem is played by the distance function defined for all $x\in\R^N$ by
\[{\rm d}_\Omega(x) = {\rm dist}(x,\Omega^c).\]
Indeed, we have the following regularity result and boundary estimates:

\begin{lemma}\label{reg}
Let ${\bf H}$ hold, $u\in\w$ satisfy \eqref{wsl}. Then,
\begin{enumroman}
\item\label{reg1} $u\in C^\alpha(\overline\Omega)$ for some $\alpha\in(0,s]$;
\item\label{reg2} there exists $C>0$ s.t.\ in $\Omega$
\[0 \le u \le C\ds.\]
\end{enumroman}
\end{lemma}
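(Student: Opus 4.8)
The plan is a bootstrap argument: first upgrade $u$ to a bounded — hence, by the known theory, Hölder continuous — function, and then extract the boundary estimate from the available regularity results for $\fpl$ with $L^\infty$ data. By Lemma \ref{wmp} we already have $u\ge 0$ in $\Omega$, which is the left-hand inequality in \ref{reg2}, so the real work is to show $u\in L^\infty(\Omega)$. For this we use only the one-sided growth ${\bf H}$ (\ref{h2}): on any superlevel set $\{u>k\}$ with $k\ge 1$ one has $f(x,u)\le c_0(1+u^{p-1})\le 2c_0\,u^{p-1}$, and $f(x,u)(u-k)^+\le 2c_0\,u^p\in L^1(\Omega)$; hence, testing \eqref{wsl} with the admissible truncation $(u-k)^+=u-u\wedge k\in\w$ and using the truncated analogue of the inequality $\|v^\pm\|^p\le\langle\fpl v,\pm v^\pm\rangle$ recalled in Section \ref{sec2}, we get the Caccioppoli-type estimate
\[\|(u-k)^+\|^p\le\langle\fpl u,(u-k)^+\rangle=\int_{\{u>k\}}f(x,u)(u-k)\,dx\le 2c_0\int_{\{u>k\}}u^{p-1}(u-k)\,dx.\]
Since $u\in\w\hookrightarrow L^{p^*_s}(\Omega)$ and $p-1<p^*_s-1$, the right-hand side is subcritical, and a standard De Giorgi--Moser iteration over the levels $k=2^j k_0$ ($j\in\N$, $k_0$ large) gives $u\in L^\infty(\Omega)$. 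This is the one genuinely technical point: since ${\bf H}$ carries no two-sided growth, $f(\cdot,u)$ is not a priori known to lie in a single Lebesgue space good enough to quote an off-the-shelf $L^\infty$-regularity theorem, and the iteration must be run from the Sobolev exponent, exploiting ${\bf H}$ (\ref{h2}) at every step.

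Set $M=\|u\|_\infty<\infty$. Then ${\bf H}$ (\ref{h2}) gives $f(x,u)\le c_0(1+M^{p-1})$ a.e.\ in $\Omega$, while for the lower bound we use ${\bf H}$ (\ref{h3}): if $0<u(x)\le M$ then $f(x,u)\ge\frac{f(x,M)}{M^{p-1}}\,u^{p-1}\ge-\|f(\cdot,M)\|_\infty$, and $f(x,0)\ge 0$ by \eqref{wmp2}; recalling ${\bf H}$ (\ref{h1}) we conclude $g:=f(\cdot,u)\in L^\infty(\Omega)$. Thus $u\in\w$ solves $\fpl u=g$ with $g\in L^\infty(\Omega)$, and the global Hölder regularity theory of \cite{IMS1} yields \ref{reg1}.

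It remains to prove the right-hand inequality in \ref{reg2}. Let $w\in\w$ be the solution of $\fpl w=\|g\|_\infty$ in $\Omega$, $w=0$ in $\Omega^c$ (the suitably rescaled fractional torsion function). Since $\fpl w=\|g\|_\infty\ge g=\fpl u$ and $w=u=0$ in $\Omega^c$, testing $\fpl u-\fpl w$ with $(u-w)^+\in\w$ and using the monotonicity of $\fpl$ forces $(u-w)^+=0$, i.e.\ $u\le w$ in $\Omega$. Combining this with the boundary estimate $w\le C\ds$ for the torsion function (cf.\ \cite{IMP}, see also \cite{IMS1}) we obtain $0\le u\le C\ds$ in $\Omega$, which is \ref{reg2}.
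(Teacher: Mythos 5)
Your argument is correct and follows essentially the same route as the paper: nonnegativity from Lemma \ref{wmp}, an $L^\infty$ bound using only the one-sided growth ${\bf H}$ (\ref{h2}) (the paper outsources this to \cite[Theorem 3.3]{CMS} and \cite[Theorem 3.1]{BP}, whereas you run the level-set iteration directly, which works since $p-1<p^*_s-1$), then $f(\cdot,u)\in L^\infty(\Omega)$ and the citation of \cite{IMS1} for H\"older regularity and the boundary estimate (your torsion-function barrier is exactly how \cite[Theorem 4.4]{IMS1} is proved). The only cosmetic omission is that you do not mention the case $ps\ge N$, which the paper dispatches via the Sobolev embeddings before starting the iteration.
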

\begin{proof}
We assume $ps<N$, the remaining cases being easily solved by fractional Sobolev embeddings (see \cite[Theorems 6.9, 6.10]{DPV}). By Lemma \ref{wmp} we have $u\ge 0$ in $\Omega$. So, arguing as in \cite[Theorem 3.3]{CMS} and using only the growth condition from above ${\bf H}$  (\ref{h2}), we see that $u\in L^q(\Omega)$ for all $q>1$. Then fix
\[r > \max\Big\{\frac{N}{ps},\,\frac{1}{p-1}\Big\},\]
so that $u^{p-1}\in L^r(\Omega)$. By ${\bf H}$  (\ref{h2}) again and arguing as in \cite[Theorem 3.1]{BP}, we get that $u\in L^\infty(\Omega)$. Now ${\bf H}$  (\ref{h2}) and \eqref{wmp1} (with $\delta=\|u\|_\infty$) imply
\[f(\cdot,u) \in L^\infty(\Omega).\]
Thus, by \cite[Theorem 1.1]{IMS1} we have \ref{reg1} and by \cite[Theorem 4.4]{IMS1} we have \ref{reg2}.
\end{proof}

Next, we improve Lemma \ref{wmp} to a strong minimum principle, incorporating a fractional Hopf-type property (see \cite{DQ} for a similar result in the pure power case):

\begin{lemma}\label{smp}
Let ${\bf H}$ hold, $u\in\w\setminus\{0\}$ satisfy \eqref{wsl}. Then,
\[\inf_\Omega\frac{u}{\ds} > 0.\]
\end{lemma}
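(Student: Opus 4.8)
The plan is to establish the strong minimum principle with the Hopf-type estimate by combining the already-proved regularity (Lemma \ref{reg}) with a comparison against a suitable subsolution built from the distance function. First I would note that, by Lemma \ref{reg}, $u\in C^\alpha(\overline\Omega)$ with $0\le u\le C\ds$ in $\Omega$, and $u\not\equiv 0$. By the weak formulation \eqref{wsl} and the growth estimate \eqref{wmp1} (applied with $\delta=\|u\|_\infty$), the function $u$ satisfies, in the weak sense,
\[
\fpl u = f(x,u) \ge -C_\delta\,u^{p-1} \quad\text{in }\Omega,
\]
so $u$ is a weak supersolution of $\fpl u + C_\delta u^{p-1} = 0$ in $\Omega$. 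Since $u\ge 0$ and $u\not\equiv 0$, a nonlocal strong maximum principle (of the type in \cite{DQ}, or obtainable directly from the weak Harnack inequality for $\fpl$) yields $u>0$ in $\Omega$; the real content is the quantitative boundary behavior $\inf_\Omega u/\ds>0$.

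For the Hopf part, I would construct a barrier. The standard fact (see \cite{IMS1,IMP}) is that, for a fixed small ball $B\Subset\Omega$, the torsion-type function $w$ solving $\fpl w = 1$ in $B$, $w=0$ in $B^c$, satisfies $w\asymp \mathrm{dist}(\cdot,B^c)^s$ near $\partial B$, and more globally one can take a $C^{1,1}$ subsolution $\underline u$ with $\underline u\asymp \ds$ in all of $\Omega$. The key estimate I would invoke is the fractional Hopf lemma from \cite{IMP}: if $v\in\w$, $v\ge 0$, $v\not\equiv 0$ and $\fpl v \ge -c\,v^{p-1}$ weakly in $\Omega$ with $v$ continuous, then $\inf_\Omega v/\ds>0$. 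If this is available off the shelf, the proof is essentially a citation once we have verified $u$ satisfies the hypothesis, which we have done above via \eqref{wmp1}. If instead one must argue by hand, I would: (i) fix a subdomain $\Omega'\Subset\Omega$ where, by continuity and the already-known positivity, $u\ge m>0$; (ii) on $\Omega\setminus\Omega'$, use the torsion function of $\Omega\setminus\Omega'$ scaled by a small constant $\eta$ as a subsolution $\eta\,\underline u$, checking that $\fpl(\eta\,\underline u)\le -C_\delta(\eta\,\underline u)^{p-1}\le f(x,u)$ for $\eta$ small (using that $\eta\,\underline u$ is bounded and $\fpl$ is $(p-1)$-homogeneous, so $\fpl(\eta\underline u)=\eta^{p-1}\fpl\underline u$, while the zeroth-order term scales the same way), and that $\eta\,\underline u\le u$ on $\partial\Omega'$ by choosing $\eta$ below $m/\|\underline u\|_\infty$; (iii) apply the weak comparison principle for $\fpl + C_\delta|\cdot|^{p-2}(\cdot)$ — which holds because this operator is strictly monotone on nonnegative functions — to conclude $u\ge \eta\,\underline u\ge c\,\ds$ on $\Omega\setminus\Omega'$, and combine with step (i) on $\Omega'$ where $u/\ds\ge m/\|\ds\|_\infty>0$.

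The main obstacle I anticipate is the boundary comparison: the zeroth-order term $-C_\delta u^{p-1}$ has the "wrong" sign for a naive maximum principle, so one cannot directly compare $u$ with the torsion function of $\fpl u=0$. The fix is that this term is harmless near $\partial\Omega$ because it is of order $u^{p-1}\asymp\ds^{p-1}$, which is lower order than the "$1$" generating the $\ds$-behavior; making this precise is exactly what forces the two-region splitting (interior region where positivity is just continuity, boundary layer where one needs the scaled subsolution with $\eta$ small enough to absorb the zeroth-order term). A secondary technical point is ensuring the subsolution $\underline u$ genuinely behaves like $\ds$ uniformly up to $\partial\Omega$ and is an admissible competitor in $\w$; this is where the $C^{1,1}$ regularity of $\partial\Omega$ and the barrier constructions of \cite{IMS1} enter. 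If \cite{IMP} already packages the fractional Hopf lemma in the form "supersolution of $\fpl v + c\,v^{p-1}\ge 0$, $v\ge0$, $v\not\equiv0$ $\Rightarrow$ $\inf v/\ds>0$", then the entire argument collapses to verifying \eqref{wmp1} and citing it, which is the route I would present.
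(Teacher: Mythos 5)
Your primary route is exactly the paper's proof: using \eqref{wmp1} with $\delta=\|u\|_\infty$ to write $\fpl u + c\,j_p(u)\ge 0$ weakly (the paper takes $c=\|f(\cdot,\|u\|_\infty)\|_\infty/\|u\|_\infty^{p-1}$) and then citing the fractional Hopf lemma \cite[Theorem 2.6]{IMP}, which is indeed available off the shelf in precisely the form you hoped for. The hand-built barrier argument you sketch as a fallback is not needed.
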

\begin{proof}
By Lemma \ref{reg} we have $u\in C^\alpha(\overline\Omega)_+\setminus\{0\}$. By ${\bf H}$  (\ref{h1}) we may set for all $t\in\R$
\[g(t) = \frac{\|f(\cdot,\|u\|_\infty)\|_\infty}{\|u\|_\infty^{p-1}}j_p(t).\]
Clearly $g\in C(\R)\cap BV_{\rm loc}(\R)$ (being monotone). By \eqref{wsl} and arguing as in Lemma \ref{wmp}, we have weakly in $\Omega$
\[\fpl u+g(u) \ge 0 = g(0).\]
By \cite[Theorem 2.6]{IMP} we conclude.
\end{proof}

In particular, by Lemma \ref{smp} we have $u>0$ in $\Omega$. In conclusion, we see that \eqref{wsl} is well posed and that any function $u\neq 0$ satisfying \eqref{wsl} actually solves \eqref{dir}.
\vskip2pt

The following technical result, which in a sense simplifies some arguments in \cite{BS,IL,MPV}, will be very useful in the next sections (recall that we tacitly identify functions defined in $\Omega$ with their $0$-extensions to $\R^N$):

\begin{lemma}\label{fra}
Let $u,v\in\w\cap C(\overline\Omega)$, $C>1$ be s.t.\ in $\Omega$
\[\frac{\ds}{C} \le u,v \le C\ds.\]
Then,
\[\frac{u^p}{v^{p-1}}\in\w.\]
\end{lemma}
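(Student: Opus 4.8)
The plan is to show that the function $w := u^p/v^{p-1}$ lies in $\w$ by establishing two things: first that $w$ agrees (up to the usual $0$-extension convention) with a function that vanishes outside $\Omega$ and is bounded, and second that its Gagliardo seminorm $[w]_{s,p}$ is finite. The boundary hypothesis $\ds/C \le u,v \le C\ds$ immediately gives, in $\Omega$, the pointwise two-sided bound
\[
C^{-2p+1}\,\ds \le w = \frac{u^p}{v^{p-1}} \le C^{2p-1}\,\ds,
\]
so in particular $0 \le w \le C^{2p-1}\ds \le C'$ in $\Omega$ and $w = 0$ in $\Omega^c$; since $u,v\in C(\overline\Omega)$ and $\ds$ is continuous up to the boundary, $w\in C(\overline\Omega)$ as well. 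Thus $w\in L^\infty(\Omega)$, and the only real point is the finiteness of $[w]_{s,p}$.

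For that I would estimate the difference quotient $|w(x)-w(y)|$ for $x,y\in\R^N$. If both points lie in $\Omega^c$ the integrand vanishes. The genuinely delicate region is near $\partial\Omega$, where both $u$ and $v$ degenerate like $\ds$; away from the boundary (on a compact subdomain $\Omega'\Subset\Omega$) one has $u,v$ bounded below by a positive constant and $u,v\in W^{s,p}$, so $w$ is a smooth function — more precisely $C^1$ — of $(u,v)$ there and standard chain-rule/Leibniz estimates control $[w]_{s,p}$ on $\Omega'\times\Omega'$. So the effort concentrates on a neighborhood of $\partial\Omega$. There I would write
\[
w(x) - w(y) = \frac{u(x)^p}{v(x)^{p-1}} - \frac{u(y)^p}{v(y)^{p-1}}
\]
and split it via a telescoping identity, say
\[
w(x)-w(y) = \frac{u(x)^p - u(y)^p}{v(x)^{p-1}} + u(y)^p\Big(\frac{1}{v(x)^{p-1}} - \frac{1}{v(y)^{p-1}}\Big),
\]
and then bound each piece using: (i) the elementary inequalities $|a^p-b^p|\le p(a^{p-1}\vee b^{p-1})|a-b|$ and $|a^{1-p}-b^{1-p}|\le (p-1)(a^{-p}\vee b^{-p})|a-b|$ for $a,b>0$; (ii) the two-sided comparison of $u,v$ with $\ds$, which converts all these powers of $u$ and $v$ into powers of $\ds(x)$ and $\ds(y)$; and (iii) the known fact that $\ds\in\w$, together with the Hardy-type / boundary behavior of $\ds$ (namely that $\ds$ and its difference quotients are controlled: this is exactly the type of estimate underlying the fact $\ds \in W^{s,p}_0(\Omega)$, which I would invoke, citing the same boundary-regularity references as in Lemma \ref{reg}). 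The upshot is a bound of the form $|w(x)-w(y)|^p \le C\big(|u(x)-u(y)|^p \Phi(x,y) + |v(x)-v(y)|^p\Psi(x,y) + (\text{terms involving }\ds)\big)$ where $\Phi,\Psi$ are ratios of powers of $\ds(x),\ds(y)$ that stay bounded on the relevant region after using the comparability $\ds(x)\sim\ds(y)$ when $|x-y|$ is small relative to $\ds(x)$, and are handled by the $\ds\in\w$ bound when $|x-y|$ is large relative to $\ds(x)$.

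The main obstacle is precisely this last dichotomy: near the boundary one cannot simply say $\ds(x)$ and $\ds(y)$ are comparable, because $y$ may be much closer to $\partial\Omega$ than $x$ (or even in $\Omega^c$). The standard device is to decompose the integration domain $\R^N\times\R^N$ according to whether $|x-y| \le \tfrac12\ds(x)$ (the "local" regime, where $\ds(y) \ge \tfrac12\ds(x)$ so all the weight ratios are bounded and one gains control from the $W^{s,p}$-regularity of $u,v$ and a Taylor expansion of the nonlinearity $w=u^p v^{1-p}$) or $|x-y| > \tfrac12\ds(x)$ (the "nonlocal" regime, where one bounds $w(x)$ and $w(y)$ separately by $\ds(x), \ds(y)$ and reduces the finiteness of the integral to the already-known finiteness of $[\ds^{\,?}]$-type integrals, i.e. to $\ds\in\w$ and to $\int_\Omega \ds^{p}/\ds^{\,ps}\,$-type Hardy inequalities valid since $ps<1$ is not needed — rather one uses that $w\le C\ds$ and the fractional Hardy inequality $\int_\Omega |w|^p/\ds^{p}\,dx\le C\|w\|^p$ together with $w\in L^\infty$). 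Carrying out the bookkeeping on the constants and verifying convergence of each of the finitely many pieces is routine but must be done carefully; it is essentially the same computation that appears, in slightly different guises, in \cite{BS,IL,MPV}, and the point of stating it as a standalone lemma is to do it once cleanly.
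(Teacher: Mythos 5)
Your argument is sound in outline and would yield the lemma, but its second half takes a genuinely different and heavier route than the paper's. Both proofs start identically: the $L^\infty$ bound via $u/v\le C^2$, the vanishing in $\Omega^c$, the telescoping identity for $w(x)-w(y)$, and the elementary Lagrange-type inequalities for $|a^p-b^p|$ and $|a^{1-p}-b^{1-p}|$. The divergence is in how the resulting weight factors are controlled. You convert everything into ratios of powers of $\ds(x)$ and $\ds(y)$ and are then forced into the local/nonlocal dichotomy $|x-y|\lessgtr\tfrac12 d_\Omega(x)$, handling the far regime by $|w|\le C\ds$ and direct integration (note that your appeal to a fractional Hardy inequality for $w$ is both circular --- it presupposes $w\in\w$ --- and unnecessary, since $|w|^p/\ds^{\,p}\le C$ pointwise; the far-regime integral converges by elementary computation alone). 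The paper avoids the dichotomy entirely: assuming w.l.o.g.\ $u(x)\ge u(y)$, the coefficient of $|v(x)-v(y)|$, namely $u(y)^p\,(v(x)^{p-2}\vee v(y)^{p-2})/(v(x)^{p-1}v(y)^{p-1})$, regroups in either case as a product of the ratios $u(\cdot)/v(\cdot)$ evaluated separately at $x$ and at $y$, hence is bounded by $\|u/v\|_\infty^p$ \emph{globally}. The obstacle you flag --- that $\ds(x)$ and $\ds(y)$ need not be comparable near the boundary --- is thus a red herring: one never needs to compare the two points, only to use that $u$ and $v$ degenerate at the same rate at each point individually. This yields the single pointwise estimate $|w(x)-w(y)|\le C(|u(x)-u(y)|+|v(x)-v(y)|)$ on all of $\R^N\times\R^N$, after which $[w]_{s,p}^p\le C\|u\|^p+C\|v\|^p$ is immediate. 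Your approach buys nothing extra here and costs the bookkeeping you defer as ``routine''; if you keep it, you should at least carry out the far-regime convergence explicitly and drop the Hardy-inequality remark.
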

\begin{proof}
First we note that in $\Omega$
\begin{equation}\label{rapporto}
0 < \frac{u}{v} = \frac{u}{\ds}\frac{\ds}{v} \le C^2,
\end{equation}
so $u/v\in L^\infty(\Omega)$. Since $u\in L^\infty(\Omega)$, we immediately see that also
\[\frac{u^p}{v^{p-1}} \in 
L^\infty(\Omega),
\]
while the same function vanishes in $\Omega^c$. There remains to show that the Gagliardo seminorm of $u^p/v^{p-1}$ is finite. To do that, we first show that there exists $C>0$ s.t.\ for all $x,y\in\R^N$
\beq\label{fra1}
\Big|\frac{u(x)^p}{v(x)^{p-1}}-\frac{u(y)^p}{v(y)^{p-1}}\Big| \le C|u(x)-u(y)|+C|v(x)-v(y)|.
\eeq
First, by symmetry and by recalling that all involved functions vanish in $\Omega^c$, we may assume $x,y\in\Omega$ and $u(x)\ge u(y)$. Indeed, if $x\in \Omega$ and $y\not\in \Omega$, by \eqref{rapporto} we have
\[\Big|\frac{u(x)^p}{v(x)^{p-1}}\Big| \le Cu(x)\]
So, if $x,y\in \Omega$, by monotonicity of the maps $t\mapsto t^{p-1},\,t^{p-2}$ in $\R^+_0$ and Lagrange's rule we have
\begin{align}\label{fra2}
\Big|\frac{u(x)^p}{v(x)^{p-1}}-\frac{u(y)^p}{v(y)^{p-1}}\Big| &\le \frac{|u(x)^p-u(y)^p|}{v(x)^{p-1}}+u(y)^p\frac{|v(x)^{p-1}-v(y)^{p-1}|}{v(x)^{p-1}v(y)^{p-1}} \\
\nonumber &\le p\frac{u(x)^{p-1}}{v(x)^{p-1}}|u(x)-u(y)|+(p-1)u(y)^p\frac{(v(x)^{p-2})\vee(v(y)^{p-2})}{v(x)^{p-1}v(y)^{p-1}}|v(x)-v(y)|.
\end{align}
The first term is easily estimated by recalling that $u/v\in L^\infty(\Omega)$. For the second term, we distinguish two cases:
\begin{itemize}[leftmargin=1cm]
\item[$(a)$] if $v(x)^{p-2}\ge v(y)^{p-2}$, then we have
\[u(y)^p\frac{(v(x)^{p-2})\vee(v(y)^{p-2})}{v(x)^{p-1}v(y)^{p-1}} \le \frac{u(y)^p}{v(x)v(y)^{p-1}} \le \frac{u(x)}{v(x)}\,\frac{u(y)^{p-1}}{v(y)^{p-1}} \le \Big\|\frac{u}{v}\Big\|_\infty^p;\]
\item[$(b)$] if $v(x)^{p-2}< v(y)^{p-2}$, then we have
\[u(y)^p\frac{(v(x)^{p-2})\vee(v(y)^{p-2})}{v(x)^{p-1}v(y)^{p-1}} \le \frac{u(y)^p}{v(x)^{p-1}v(y)} \le \frac{u(x)^{p-1}}{v(x)^{p-1}}\,\frac{u(y)}{v(y)} \le \Big\|\frac{u}{v}\Big\|_\infty^p.\]
\end{itemize}
Plugging the above estimates into \eqref{fra2}, we have
\[\Big|\frac{u(x)^p}{v(x)^{p-1}}-\frac{u(y)^p}{v(y)^{p-1}}\Big| \le p \Big\|\frac{u}{v}\Big\|_\infty^{p-1}|u(x)-u(y)|+(p-1) \Big\|\frac{u}{v}\Big\|_\infty^p|v(x)-v(y)|,\]
so \eqref{fra1} is proved. Now, integrating \eqref{fra1} and using the elementary inequality
\[(a+b)^p \le 2^{p-1}(a^p+b^b) \ \text{for all $a,b\in\R^+$,}\]
we get
\[\iint_{\R^N\times\R^N}\Big|\frac{u(x)^p}{v(x)^{p-1}}-\frac{u(y)^p}{v(y)^{p-1}}\Big|^p\,\frac{dx\,dy}{|x-y|^{N+ps}} \le C\|u\|^p+C\|v\|^p < \infty.\]
So we conclude that $u^p/v^{p-1}\in\w$.
\end{proof}

\begin{remark}\label{deg}
In the degenerate and linear cases $p\ge 2$, much more can be said about the regularity of weak solutions. First, regarding pure H\"older regularity, we have $u\in C^s(\overline\Omega)$. More important, let us introduce the weighted H\"older spaces
\[C^\alpha_s(\overline\Omega) = \Big\{u\in C^0(\overline\Omega):\,\frac{u}{\ds} \ \text{has a $\alpha$-H\"older continuous extension to $\overline\Omega$}\Big\}\]
(for $\alpha=0$, the extension is simply continuous). It can be seen that any weak solution $u$ of \eqref{dir} satisfies $u\in C^\alpha_s(\overline\Omega)$ for some $\alpha\in(0,s)$, and if $u\neq 0$ then
\[u \in {\rm int}(C^0_s(\overline\Omega)_+)\]
(see \cite[Theorem 1.1]{IMS2} and \cite[Theorem 2.7]{IMP}). Obviously, also the proofs of Lemma \ref{fra} above and of some of the following results are simpler in such a case. On the other hand, in the singular case $p\in(1,2)$ this regularity theory is not available so far. Nevertheless, Lemmas \ref{smp} and \ref{fra} above hold the same and permit to face both the degenerate and the singular cases.
\end{remark}

\section{Uniqueness}\label{sec3}

In this section we prove that the solution of problem \eqref{dir}, if any, is unique. The argument is similar to that of \cite[Theorem 2.8]{IL} and makes a crucial use of Lemma \ref{fra} and the following discrete Picone's inequality from \cite[Proposition 2.2]{BS}:
\beq\label{dpi}
j_p(a-b)\Big(\frac{c^p}{a^{p-1}}-\frac{d^p}{b^{p-1}}\Big) \le |c-d|^p \ \text{for all $a,b\in\R^+_0$, $c,d\in\R^+$.}
\eeq
Our uniqueness result is the following:

\begin{proposition}\label{uni}
Let ${\bf H}$ hold. Then, \eqref{dir} has at most one solution.
\end{proposition}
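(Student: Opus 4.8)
The plan is to suppose, for contradiction, that $u_1,u_2\in\w$ are two distinct solutions of \eqref{dir}, and derive a contradiction from the strict monotonicity hypothesis ${\bf H}$ (\ref{h3}) via the discrete Picone inequality \eqref{dpi}. By Lemmas \ref{reg} and \ref{smp}, each $u_i$ lies in $C^\alpha(\overline\Omega)_+\setminus\{0\}$ and satisfies $\ds/C\le u_i\le C\ds$ in $\Omega$ for a common constant $C>1$. This is precisely the hypothesis of Lemma \ref{fra}, so the test functions $u_1^p/u_2^{p-1}$ and $u_2^p/u_1^{p-1}$ both belong to $\w$ and are therefore admissible in the weak formulation \eqref{wsl}.

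Next I would test \eqref{wsl} for $u_1$ with $\varphi=u_1^p/u_2^{p-1}-u_1$ and \eqref{wsl} for $u_2$ with $\varphi=u_2^p/u_1^{p-1}-u_2$, then add the two resulting identities. On the operator side, after expanding $\langle\fpl u_i,u_i\rangle=\|u_i\|^p$, one is left with the sum of the ``cross'' terms
\[
\iint_{\R^N\times\R^N}\frac{j_p(u_1(x)-u_1(y))}{|x-y|^{N+ps}}\Bigl(\tfrac{u_1(x)^p}{u_2(x)^{p-1}}-\tfrac{u_1(y)^p}{u_2(y)^{p-1}}\Bigr)\,dx\,dy + (\text{same with }1\leftrightarrow 2) - \|u_1\|^p-\|u_2\|^p,
\]
and applying the discrete Picone inequality \eqref{dpi} pointwise (with $a=u_1(x)$, $b=u_1(y)$, $c=u_2(x)$, $d=u_2(y)$ and the symmetric choice) shows that this entire quantity is $\le 0$; note the integrands are absolutely integrable thanks to $u_i^p/u_j^{p-1}\in\w$. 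On the reaction side, the same combination produces
\[
\int_\Omega\Bigl(\frac{f(x,u_1)}{u_1^{p-1}}-\frac{f(x,u_2)}{u_2^{p-1}}\Bigr)\bigl(u_1^p-u_2^p\bigr)\,dx,
\]
so we conclude that this integral is $\le 0$. But by ${\bf H}$ (\ref{h3}) the map $t\mapsto f(x,t)/t^{p-1}$ is strictly decreasing, hence the integrand is $\ge 0$ everywhere and is $>0$ on the set $\{u_1\neq u_2\}$; therefore this set is null, i.e.\ $u_1=u_2$.

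The main obstacle is the bookkeeping needed to justify that all the integrals involved are finite and that the manipulations (splitting $\langle\fpl u_i,u_i^p/u_j^{p-1}-u_i\rangle$, reindexing $x\leftrightarrow y$ to symmetrize, adding the two equations) are legitimate; here the membership $u_i^p/u_j^{p-1}\in\w$ from Lemma \ref{fra} is exactly what makes every term a finite integral against the kernel $|x-y|^{-N-ps}$, and the growth bound ${\bf H}$ (\ref{h2}) together with $u_i\in L^\infty(\Omega)$ keeps the right-hand side integrable. A secondary point to handle carefully is the case of equality in \eqref{dpi}: since we only need the inequality (not its equality analysis) on the operator side and we extract strictness from ${\bf H}$ (\ref{h3}) on the reaction side, no delicate rigidity statement for Picone's inequality is required, which is what makes this route simpler than the ``hidden convexity'' approach of \cite{DS}.
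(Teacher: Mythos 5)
Your overall strategy is correct and it is genuinely different from the paper's proof of Proposition \ref{uni}: you run the symmetric Picone (Diaz--Saa type) argument directly between the two solutions, testing each equation with a quotient built from the other solution and adding, which is in fact the scheme the paper deploys later for the \emph{necessary} condition (Lemma \ref{inf}). The paper's own uniqueness proof instead tests with the truncated quotients $w/u^{p-1}$ and $w/v^{p-1}$, where $w=(u^p-v^p)^+$, proves a pointwise inequality between the two operator terms by a four-case analysis (using \eqref{dpi} only in the case $u>v$ at both points), and deduces first $u\le v$ and then $v\le u$. Your version is shorter, needs no case analysis, and, as you correctly observe, requires no rigidity statement for Picone since strictness is extracted from ${\bf H}$ (\ref{h3}); the paper's version has the merit of being a genuine one-sided comparison principle. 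In both routes the essential point is identical: Lemmas \ref{reg} and \ref{smp} give the two-sided bound by $\ds$, and Lemma \ref{fra} then makes the quotient test functions admissible in $\w$, which also settles the integrability bookkeeping you worry about.

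That said, the formulas you wrote do not implement the argument you describe, and three consistent index/sign slips need fixing. First, the test function for the $u_1$-equation must be $u_2^p/u_1^{p-1}-u_1$ (not $u_1^p/u_2^{p-1}-u_1$), and symmetrically for $u_2$: only with this choice does the reaction side collapse to the single integral
\[
\int_\Omega\Big(\frac{f(x,u_1)}{u_1^{p-1}}-\frac{f(x,u_2)}{u_2^{p-1}}\Big)(u_2^p-u_1^p)\,dx,
\]
whereas your choice leaves terms like $\int f(x,u_1)(u_1^p-u_1u_2^{p-1})u_2^{1-p}\,dx$ that do not factor. Second, and consequently, the cross term paired with $j_p(u_1(x)-u_1(y))$ must be the increment of $u_2^p/u_1^{p-1}$, not of $u_1^p/u_2^{p-1}$: the substitution $a=u_1(x)$, $b=u_1(y)$, $c=u_2(x)$, $d=u_2(y)$ in \eqref{dpi} (exactly the one you quote) bounds $j_p(u_1(x)-u_1(y))\big(\frac{u_2(x)^p}{u_1(x)^{p-1}}-\frac{u_2(y)^p}{u_1(y)^{p-1}}\big)$ by $|u_2(x)-u_2(y)|^p$, and says nothing about the term you actually displayed, for which \eqref{dpi} provides no bound. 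Third, the signs at the end must be made coherent: with the convention $\langle\fpl u_1,u_1-\frac{u_2^p}{u_1^{p-1}}\rangle+\langle\fpl u_2,u_2-\frac{u_1^p}{u_2^{p-1}}\rangle\ge 0$ (Picone), the reaction side equals $\int_\Omega\big(\frac{f(x,u_1)}{u_1^{p-1}}-\frac{f(x,u_2)}{u_2^{p-1}}\big)(u_1^p-u_2^p)\,dx$, whose integrand is $\le 0$ because the two factors have opposite signs (the quotient is strictly decreasing), and is strictly negative on $\{u_1\neq u_2\}$; you assert it is $\ge 0$. Once these are aligned, $0\le\text{(operator side)}=\text{(reaction side)}\le 0$ forces $|\{u_1\neq u_2\}|=0$ as intended.
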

\begin{proof}
Let $u,v\in\w\setminus\{0\}$ satisfy \eqref{wsl}. By Lemmas \ref{reg}, \ref{smp} we have $u,v\in C^\alpha(\overline\Omega)$, and we can find $C>1$ s.t.\ in $\Omega$
\[\frac{\ds}{C} \le u,\,v \le C\ds.\]
So, by Lemma \ref{fra} we have
\[\frac{u^p}{v^{p-1}},\,\frac{v^p}{u^{p-1}} \in \w.\]
Set $w=(u^p-v^p)^+$, then by the relations above
\[\frac{w}{v^{p-1}} = \Big(\frac{u^p}{v^{p-1}}-v\Big)^+ \in \w,\]
and similarly $w/u^{p-1}\in\w$. Testing \eqref{wsl} with such functions and recalling ${\bf H}$  (\ref{h3}), we have
\beq\label{uni1}
\Big\langle\fpl u,\frac{w}{u^{p-1}}\Big\rangle-\Big\langle\fpl v,\frac{w}{v^{p-1}}\Big\rangle = \int_{\{u>v\}}\Big(\frac{f(x,u)}{u^{p-1}}-\frac{f(x,v)}{v^{p-1}}\Big)(u^p-v^p)\,dx \le 0.
\eeq
To proceed, we prove that for all $x,y\in\R^N$
\beq\label{uni2}
j_p(u(x)-u(y))\Big(\frac{w(x)}{u(x)^{p-1}}-\frac{w(y)}{u(y)^{p-1}}\Big) \ge j_p(v(x)-v(y))\Big(\frac{w(x)}{v(x)^{p-1}}-\frac{w(y)}{v(y)^{p-1}}\Big)
\eeq
(with the usual convention that any function defined in $\Omega$ is identified with its $0$-extension to $\R^N$). Indeed, four cases may occur:
\begin{itemize}[leftmargin=1cm]
\item[$(a)$] if $u(x)>v(x)$, $u(y)>v(y)$, then by applying \eqref{dpi} twice we have
\begin{align*}
j_p(u(x)-u(y))\Big(\frac{w(x)}{u(x)^{p-1}}-\frac{w(y)}{u(y)^{p-1}}\Big) &= |u(x)-u(y)|^p-j_p(u(x)-u(y))\Big(\frac{v(x)^p}{u(x)^{p-1}}-\frac{v(y)^p}{u(y)^{p-1}}\Big) \\
&\ge |u(x)-u(y)|^p-|v(x)-v(y)|^p \\
&\ge j_p(v(x)-v(y))\Big(\frac{u(x)^p}{v(x)^{p-1}}-\frac{u(y)^p}{v(y)^{p-1}}\Big)-|v(x)-v(y)|^p \\
&= j_p(v(x)-v(y))\Big(\frac{w(x)}{v(x)^{p-1}}-\frac{w(y)}{v(y)^{p-1}}\Big);
\end{align*}
\item[$(b)$] if $u(x)>v(x)$, $u(y)\le v(y)$, then
\[\frac{u(y)}{u(x)} \le \frac{v(y)}{v(x)},\]
and since $j_p$ is increasing and $(p-1)$-homogeneous in $\R$, we have
\begin{align*}
j_p(u(x)-u(y))\Big(\frac{w(x)}{u(x)^{p-1}}-\frac{w(y)}{u(y)^{p-1}}\Big) &= j_p(u(x)-u(y))\frac{u(x)^p-v(x)^p}{u(x)^{p-1}} \\
&= j_p\Big(1-\frac{u(y)}{u(x)}\Big)(u(x)^p-v(x)^p) \\
&\ge  j_p\Big(1-\frac{v(y)}{v(x)}\Big)(u(x)^p-v(x)^p) \\
&= j_p(v(x)-v(y))\frac{u(x)^p-v(x)^p}{v(x)^{p-1}} \\
&= j_p(v(x)-v(y))\Big(\frac{w(x)}{v(x)^{p-1}}-\frac{w(y)}{v(y)^{p-1}}\Big);
\end{align*}
\item[$(c)$] if $u(x)\leq v(x)$, $u(y)> v(y)$, then
\[\frac{u(x)}{u(y)} \le \frac{v(x)}{v(y)},\]
and, similarly to the previous case, we have
\begin{align*}
j_p(u(x)-u(y))\Big(\frac{w(x)}{u(x)^{p-1}}-\frac{w(y)}{u(y)^{p-1}}\Big) &=- j_p(u(x)-u(y))\frac{u(y)^p-v(y)^p}{u(y)^{p-1}} \\
&\geq -j_p(v(x)-v(y))\frac{u(y)^p-v(y)^p}{v(y)^{p-1}} \\
&= j_p(v(x)-v(y))\Big(\frac{w(x)}{v(x)^{p-1}}-\frac{w(y)}{v(y)^{p-1}}\Big);
\end{align*}
\item[$(d)$] if $u(x)\le v(x)$, $u(y)\le v(y)$, then we have $w(x)=w(y)=0$ and \eqref{uni2} holds trivially.
\end{itemize}
Integrating \eqref{uni2} in $\R^N\times\R^N$ we have
\[\Big\langle\fpl u,\frac{w}{u^{p-1}}\Big\rangle \ge \Big\langle\fpl v,\frac{w}{v^{p-1}}\Big\rangle,\]
which along with \eqref{uni1} forces
\[\int_{\{u>v\}}\Big(\frac{f(x,u)}{u^{p-1}}-\frac{f(x,v)}{v^{p-1}}\Big)(u^p-v^p)\,dx = 0.\]
By ${\bf H}$  (\ref{h3}) (strict monotonicity), the integrand above is negative, so we deduce that $\{u>v\}$ has $0$-measure, i.e., $u\le v$ in $\Omega$. Similarly we see that $u\ge v$ in $\Omega$, and thus $u=v$.
\end{proof}

\section{Existence/I: necessary condition}\label{sec4}

In this section we assume the existence of a solution of \eqref{dir}, and we prove that
\[\lambda_1(a_0)<0<\lambda_1(a_\infty),\]
with $\lambda_1(a_0),\lambda_1(a_\infty)\in\R\cup\{\pm\infty\}$ defined by \eqref{la1}. We begin with the inequality on the right:

\begin{lemma}\label{inf}
Let ${\bf H}$ hold and $u$ be a solution of \eqref{dir}. Then, $\lambda_1(a_\infty)>0$.
\end{lemma}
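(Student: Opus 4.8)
The plan is to compare the solution $u$ with the eigenvalue functional by using $u$ itself as a test function in a suitably rescaled way. Since $u$ solves \eqref{wsl}, testing with $\varphi = u$ gives $\|u\|^p = \int_\Omega f(x,u)u\,dx$. By ${\bf H}$ (\ref{h3}), for a.e.\ $x\in\Omega$ and all $t>0$ we have $f(x,t)/t^{p-1} > a_\infty(x)$, whence $f(x,u)u = (f(x,u)/u^{p-1})u^p > a_\infty(x)u^p$ pointwise on $\Omega$ (using $u>0$ from Lemma \ref{smp}, and interpreting the inequality appropriately on the set where $a_\infty = -\infty$). Integrating, $\|u\|^p > \int_\Omega a_\infty(x)u^p\,dx$, hence $\|u\|^p - \int_{\{u\neq 0\}} a_\infty(x)|u|^p\,dx > 0$, which shows that the Rayleigh-type quotient defining $\lambda_1(a_\infty)$ in \eqref{la1} is strictly positive \emph{at the particular test function $u$}. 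The difficulty is that this only gives positivity of one quotient, not of the infimum over all $v\in\w\setminus\{0\}$; so $\lambda_1(a_\infty) \ge 0$ is not yet clear from this alone, and we need a genuine minimizer argument.

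The key step is therefore to show that $\lambda_1(a_\infty)$ is attained and that, at a minimizer, the strict inequality forces $\lambda_1(a_\infty) > 0$. First I would observe that $a_\infty \le C$ in $\Omega$ (as noted in the introduction after \eqref{la1}), so the numerator in \eqref{la1} is bounded below by $\|v\|^p - C\|v\|_p^p$; combined with the bound $\lambda_1(a_\infty) \le \|u\|^p - \int a_\infty u^p < \infty$ obtained above and the compact embedding $\w \hookrightarrow L^p(\Omega)$, a standard direct-method argument (as developed for weighted eigenvalues of $\fpl$ in \cite{I,MBSZ}) yields a minimizer $v_1 \in \w\setminus\{0\}$, which we may take with $v_1 \ge 0$ by replacing $v_1$ with $|v_1|$ (this does not increase the numerator since $\||v_1|\| \le \|v_1\|$). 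The Euler--Lagrange equation for $v_1$ is precisely the weighted eigenvalue problem \eqref{evp} with $a = a_\infty$ and $\lambda = \lambda_1(a_\infty)$, so $v_1$ is a nonnegative weak solution of a fractional $p$-Laplacian equation with a bounded-above potential; by the strong minimum principle (in the spirit of Lemma \ref{smp}, applied to this eigenfunction equation, or directly citing the positivity of the principal eigenfunction from \cite{I,MBSZ}) we get $v_1 > 0$ in $\Omega$.

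Now I would run the pointwise comparison again, this time with $v = v_1$ in the \emph{solution's} weak formulation and with $v_1$ in its \emph{own} eigenvalue identity. We have $\lambda_1(a_\infty)\|v_1\|_p^p = \|v_1\|^p - \int_\Omega a_\infty |v_1|^p\,dx$. To relate $\|v_1\|^p$ to $\int f(x,v_1)$-type quantities, I would instead test the \emph{solution equation} \eqref{wsl} for $u$ with the function $v_1^p/u^{p-1}$ and test the \emph{eigenvalue equation} for $v_1$ with $v_1$, then invoke the discrete Picone inequality \eqref{dpi} exactly as in the proof of Proposition \ref{uni}: integrating \eqref{dpi} over $\R^N\times\R^N$ gives
\[
\Big\langle \fpl u, \frac{v_1^p}{u^{p-1}} \Big\rangle \le \|v_1\|^p.
\]
The left-hand side equals $\int_\Omega f(x,u) v_1^p / u^{p-1}\,dx = \int_\Omega (f(x,u)/u^{p-1}) v_1^p\,dx > \int_\Omega a_\infty(x) v_1^p\,dx$ by ${\bf H}$ (\ref{h3}) and $u > 0$. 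Hence $\int_\Omega a_\infty v_1^p\,dx < \|v_1\|^p$, i.e.\ $\lambda_1(a_\infty)\|v_1\|_p^p > 0$, and since $v_1 \neq 0$ we conclude $\lambda_1(a_\infty) > 0$. The main obstacle is the careful handling of the potential where $a_\infty = -\infty$: there one must check that $\int_\Omega a_\infty v_1^p$ and $\int_\Omega (f(x,u)/u^{p-1}) v_1^p$ are well-defined (possibly $-\infty$) and that the strict inequality survives — this is where the hypothesis that $a_\infty$ takes only the value $-\infty$ (never $+\infty$) and the finiteness of $\lambda_1(a_\infty)$, guaranteed by the existence of $u$, are essential; one restricts integration to $\{v_1 \neq 0\}$ as in \eqref{la1} and uses that on the set $\{a_\infty = -\infty\}$ both sides are $-\infty$ and contribute nothing to the comparison, while the strict gap comes from the set where $a_\infty$ is finite, which has positive measure intersected with $\{v_1 \neq 0\}$ precisely because $\lambda_1(a_\infty) < \infty$.
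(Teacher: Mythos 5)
Your overall strategy (compare $u$ with a principal eigenfunction via the discrete Picone inequality \eqref{dpi}) is the right one, and your final computation would indeed yield $\lambda_1(a_\infty)\|v_1\|_p^p>0$ \emph{if} all its ingredients were available. But there is a genuine gap: you work directly with the weight $a_\infty$, and for this weight the objects you need do not exist in general. First, if $a_\infty=-\infty$ on a set $E$ of positive measure (which hypotheses ${\bf H}$ allow), any minimizer $v_1$ of \eqref{la1} must vanish a.e.\ on $E$ (otherwise its Rayleigh quotient is $+\infty$, while $\lambda_1(a_\infty)$ is finite by your own upper bound), so $v_1>0$ in $\Omega$ is false and the strong minimum principle cannot be invoked; moreover, since $a_\infty$ need not be locally integrable, the functional $v\mapsto\int a_\infty|v|^p\,dx$ is not differentiable and the Euler--Lagrange equation \eqref{evp} with $a=a_\infty$ is not justified. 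Second, and decisively, your key step requires $v_1^p/u^{p-1}\in\w$ in order to test \eqref{wsl} and integrate \eqref{dpi}. By Lemma \ref{fra} this demands $v_1\in C(\overline\Omega)$ together with the two-sided bound $\ds/C\le v_1\le C\ds$, i.e.\ the full regularity and Hopf-type theory for the eigenfunction --- which is available only for weights in $L^\infty$ (or $L^q$ with $q$ large), not for a weight that is merely bounded above and possibly $-\infty$ on a non-null set. Without that, $v_1^p/u^{p-1}$ need not even belong to $\w$, so neither the testing identity nor the integrated Picone inequality is licensed. Your closing remarks about the set $\{a_\infty=-\infty\}$ do not repair this.

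The paper circumvents exactly this obstruction by never posing the eigenvalue problem for $a_\infty$ itself. It introduces the intermediate \emph{bounded} weight
\[
a(x)=\frac{f(x,\|u\|_\infty)}{\|u\|_\infty^{p-1}}\in L^\infty(\Omega),
\qquad
\frac{f(x,u)}{u^{p-1}}\ge a(x)>a_\infty(x)\ \text{in }\Omega,
\]
for which \cite[Proposition 3.3]{I} and Lemmas \ref{reg}, \ref{smp} give a genuine positive principal eigenfunction $v\in C^\alpha(\overline\Omega)$ with $\ds/C\le v\le C\ds$; after rescaling $v$ by $\tau$ so that $u<\tau v$, Lemma \ref{fra} makes both Picone test functions admissible, and the two-sided Picone argument yields $\lambda_1(a)(\tau^p-\|u\|_p^p)>0$, hence $\lambda_1(a)>0$. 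Only at the very end does one pass to $a_\infty$, via the elementary monotonicity $\lambda_1(a_\infty)\ge\lambda_1(a)$ coming from $a_\infty<a$. If you insert this intermediate weight into your argument (and keep your one-sided Picone inequality, which is fine), the proof goes through.
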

\begin{proof}
From Lemmas \ref{reg}, \ref{smp} we know that $u\in C^\alpha(\overline\Omega)$ satisfies \eqref{wsl} and there exists $C>1$ s.t.\ in $\Omega$
\[\frac{\ds}{C} \le u \le C\ds.\]
In particular $u\in L^\infty(\Omega)$. Set for all $x\in\Omega$
\[a(x) = \frac{f(x,\|u\|_\infty)}{\|u\|_\infty^{p-1}}.\]
By ${\bf H}$  (\ref{h1})  (\ref{h3}) we have $a\in L^\infty(\Omega)$ and for a.e.\ $x\in\Omega$
\beq\label{inf1}
\frac{f(x,u)}{u^{p-1}} \ge a(x) > a_\infty(x),
\eeq
the first inequality being strict on a non-null subset of $\Omega$. Define $\lambda_1(a)\in\R$ according to \eqref{la1}, then the infimum is attained at some $v\in\w$ with $\|v\|_p=1$. Since $|v|\in\w$ with $\||v|\|\le\|v\|$, we may assume $v\ge 0$ in $\Omega$. In particular,
\[0 < \|v\|^p = \int_\Omega(\lambda_1(a)+a(x))v^p\,dx.\]
So $m=\lambda_1(a)+a\in L^\infty(\Omega)$ is a weight function s.t.\ $m^+\neq 0$. By \cite[Proposition 3.3]{I} and arguing as in Lemmas \ref{reg}, \ref{smp}, we see that $v\in C^\alpha(\overline\Omega)_+$ is unique and, for a possibly bigger $C>1$, we have in $\Omega$
\[\frac{\ds}{C} \le v \le C\ds.\]
Equivalently, $v$ is the unique positive, $L^p(\Omega)$-normalized principal eigenfunction of \eqref{evp}. Thanks to the estimates above, we can find $\tau>0$ s.t.\ $u<\tau v$ in $\Omega$, hence in particular $\|u\|_p<\tau$. Then, $\tau v\in\w_+\cap C^\alpha(\overline\Omega)\setminus\{0\}$ satisfies weakly in $\Omega$
\beq\label{inf2}
\fpl(\tau v) = (\lambda_1(a)+a(x))(\tau v)^{p-1}.
\eeq
By Lemma \ref{fra} we have
\[\frac{u^p}{(\tau v)^{p-1}},\,\frac{(\tau v)^p}{u^{p-1}} \in \w.\]
Testing \eqref{wsl} and \eqref{inf2} with convenient functions, and using \eqref{inf1} and the normalization of $v$, we have
\begin{align*}
&\Big\langle\fpl u,u-\frac{(\tau v)^p}{u^{p-1}}\Big\rangle+\Big\langle\fpl(\tau v),\tau v-\frac{u^p}{(\tau v)^{p-1}}\Big\rangle \\
&= \int_\Omega f(x,u)\frac{u^p-(\tau v)^p}{u^{p-1}}\,dx+\int_\Omega(\lambda_1(a)+a(x))(\tau v)^{p-1}\frac{(\tau v)^p-u^p}{(\tau v)^{p-1}}\,dx \\
&= \int_\Omega\Big(\frac{f(x,u)}{u^{p-1}}-a(x)\Big)(u^p-(\tau v)^p)\,dx+\lambda_1(a)\int_\Omega((\tau v)^p-u^p)\,dx \\
&< \lambda_1(a)(\tau^p-\|u\|_p^p).
\end{align*}
Besides, by the aforementioned Picone's inequality \eqref{dpi} we have
\begin{align*}
&\Big\langle\fpl u,u-\frac{(\tau v)^p}{u^{p-1}}\Big\rangle+\Big\langle\fpl(\tau v),\tau v-\frac{u^p}{(\tau v)^{p-1}}\Big\rangle \\
&= \|u\|^p+\|\tau v\|^p \\
&- \iint_{\R^N\times\R^N}j_p(u(x)-u(y))\Big(\frac{(\tau v(x))^p}{u(x)^{p-1}}-\frac{(\tau v(y))^p}{u(y)^{p-1}}\Big)\,\frac{dx\,dy}{|x-y|^{N+ps}} \\
&- \iint_{\R^N\times\R^N}j_p(\tau v(x)-\tau v(y))\Big(\frac{u(x)^p}{(\tau v(x))^{p-1}}-\frac{u(y)^p}{(\tau v(y))^{p-1}}\Big)\,\frac{dx\,dy}{|x-y|^{N+ps}} \ge 0.
\end{align*}
Concatenating the inequalities above, we get
\[\lambda_1(a)(\tau^p-\|u\|_p^p) > 0,\]
which along with $\tau>\|u\|_p$ implies $\lambda_1(a)>0$. Finally, we note that by \eqref{inf1} we have for all $w\in\w$ with $\|w\|_p=1$
\[\|w\|^p-\int_{\{w\neq 0\}}a_\infty(x)|w|^p\,dx \ge \|w\|^p-\int_{\{w\neq 0\}}a(x)|w|^p\,dx \ge \lambda_1(a).\]
Taking the infimum over $w$, we find
\[\lambda_1(a_\infty) \ge \lambda_1(a) > 0\]
and conclude.
\end{proof}

\begin{remark}\label{bda0}
In the proof of Lemma \ref{inf} we have seen, {\em en passant}, that the mapping $a\mapsto\lambda_1(a)$ defined in \eqref{la1} is monotone nonincreasing with respect to the pointwise ordering in $\Omega$. If $a_\infty\in L^\infty(\Omega)$ (for instance when $f$ satisfies a global growth condition), we would get  $\lambda_1(a_\infty)>\lambda_1(a)$ reasoning as in \cite[Proposition 4.2]{I}.
\end{remark}

The argument for the inequality on the left is simpler:

\begin{lemma}\label{zer}
Let ${\bf H}$ hold and $u$ be a solution of \eqref{dir}. Then, $\lambda_1(a_0)<0$.
\end{lemma}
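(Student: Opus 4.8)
\emph{Plan of proof.} The idea is the direct one: test both the weak formulation \eqref{wsl} and the variational characterization \eqref{la1} of $\lambda_1(a_0)$ with the solution $u$ itself. First I would recall, via Lemma \ref{smp}, that $u>0$ in $\Omega$, so that $\{u\neq 0\}=\Omega$ up to a null set, and, as already established in the proof of Lemma \ref{reg}, that $f(\cdot,u)\in L^\infty(\Omega)$; hence testing \eqref{wsl} with $u\in\w$ gives $\|u\|^p=\langle\fpl u,u\rangle=\int_\Omega f(x,u)\,u\,dx$, a finite quantity.

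The second step exploits ${\bf H}$ (\ref{h3}): since $t\mapsto f(x,t)/t^{p-1}$ is strictly decreasing on $\R^+_0$, one has $a_0(x)=\sup_{t>0}f(x,t)/t^{p-1}$, and comparing with any intermediate value $0<t<u(x)$ yields the pointwise strict inequality $a_0(x)>f(x,u(x))/u(x)^{p-1}$ for a.e.\ $x\in\Omega$. Multiplying by $u(x)^p>0$ and integrating over $\Omega$ then produces $\int_{\{u\neq 0\}}a_0(x)|u|^p\,dx>\int_\Omega f(x,u)\,u\,dx=\|u\|^p$. Finally, inserting $v=u\neq 0$ into \eqref{la1} gives a strictly negative Rayleigh quotient, whence $\lambda_1(a_0)<0$.

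The computation is genuinely short, and I do not foresee a serious obstacle; unlike the inequality $\lambda_1(a_\infty)>0$, no principal eigenfunction estimates or Picone-type arguments are needed here. The two mild points I would make explicit are: $(a)$ the integral $\int_\Omega a_0|u|^p\,dx$ a priori takes values in $(-\infty,+\infty]$ — it is bounded below because $a_0\ge -C$ — and if it equals $+\infty$ then \eqref{la1} immediately forces $\lambda_1(a_0)=-\infty<0$, so nothing remains to prove; $(b)$ it is precisely the \emph{strictness} of the monotonicity in ${\bf H}$ (\ref{h3}), valid on a set of full measure since $u>0$ a.e.\ in $\Omega$, that upgrades the conclusion from $\lambda_1(a_0)\le 0$ to $\lambda_1(a_0)<0$.
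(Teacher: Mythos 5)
Your proof is correct and follows essentially the same route as the paper's: test \eqref{wsl} with $u$ itself, use the strict monotonicity in ${\bf H}$ (\ref{h3}) together with $u>0$ (from Lemma \ref{smp}) to obtain $\|u\|^p=\int_\Omega f(x,u)u\,dx<\int_\Omega a_0(x)u^p\,dx$, and then insert $v=u$ into \eqref{la1}. Your remark $(a)$ corresponds exactly to the paper's preliminary reduction to the case $a_0<\infty$ a.e.\ in $\Omega$, so nothing needs to be changed.
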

\begin{proof}
If $a_0=\infty$ on a non-null subset of $\Omega$, then by \eqref{la1} we have $\lambda_1(a_0)=-\infty$ and there is nothing to prove. So we may assume $a_0<\infty$ in $\Omega$. By ${\bf H}$  (\ref{h3}) and \eqref{wmp1} (with $\delta=1$) we have for a.e.\ $x\in\Omega$
\[a_0(x) > f(x,1) \ge -C_1.\]
As in Lemma \ref{inf} we have $u\in C^\alpha(\overline\Omega)$ and $u>0$ in $\Omega$. Testing \eqref{wsl} with $u\in\w$ and using ${\bf H}$  (\ref{h3}) we have
\[\|u\|^p = \langle\fpl u,u\rangle = \int_\Omega f(x,u)u\,dx < \int_\Omega a_0(x)u^p\,dx.\]
So by \eqref{la1} we have
\[\lambda_1(a_0) \le \frac{\|u\|^p-\int_\Omega a_0(x)u^p\,dx}{\|u\|_p^p} < 0\]
and we conclude.
\end{proof}

Summarizing Lemmas \ref{inf}, \ref{zer}, we have the following necessary condition for existence:

\begin{proposition}\label{nec}
Let ${\bf H}$ hold and \eqref{dir} have a solution. Then, $\lambda_1(a_0)<0<\lambda_1(a_\infty)$.
\end{proposition}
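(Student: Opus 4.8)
The statement of Proposition \ref{nec} is nothing but the conjunction of the two lemmas just established, so the proof I propose is immediate: assuming \eqref{dir} has a solution $u$, I would invoke Lemma \ref{zer} to get $\lambda_1(a_0)<0$ and Lemma \ref{inf} to get $\lambda_1(a_\infty)>0$, and concatenate the two inequalities into $\lambda_1(a_0)<0<\lambda_1(a_\infty)$. There is no residual work to do at the level of the Proposition itself; the content lies entirely in the two lemmas.

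Since the difficulty has already been absorbed into Lemmas \ref{inf} and \ref{zer}, it is worth recalling where it sat. The left inequality is the easy half: one tests \eqref{wsl} with $u$ itself, bounds $f(x,u)u$ strictly above by $a_0(x)u^p$ using the strict monotonicity ${\bf H}$ (\ref{h3}) (after checking $a_0<\infty$ without loss of generality, the case $a_0=\infty$ on a positive-measure set giving $\lambda_1(a_0)=-\infty$ for free), and reads off $\lambda_1(a_0)<0$ from the variational characterization \eqref{la1}. The right inequality is the genuinely delicate point: one introduces the bounded weight $a(x)=f(x,\|u\|_\infty)/\|u\|_\infty^{p-1}$, which satisfies $f(x,u)/u^{p-1}\ge a(x)>a_\infty(x)$ with strict inequality on a positive-measure set, takes the $L^p$-normalized principal eigenfunction $v$ of \eqref{evp} with this weight, rescales it to $\tau v>u$, and compares the energies of $u$ and $\tau v$. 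The comparison uses Lemma \ref{fra} (legitimized by the two-sided estimates $\ds/C\le u,v\le C\ds$) to make $u^p/(\tau v)^{p-1}$ and $(\tau v)^p/u^{p-1}$ admissible test functions, and the discrete Picone inequality \eqref{dpi} to conclude $\lambda_1(a)(\tau^p-\|u\|_p^p)>0$, hence $\lambda_1(a)>0$; monotonicity of $a\mapsto\lambda_1(a)$ then upgrades this to $\lambda_1(a_\infty)\ge\lambda_1(a)>0$.

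Thus the only "obstacle" for Proposition \ref{nec} is bookkeeping: one must make sure the solution $u$ fed to both lemmas is the same object, which is automatic, and that the two conclusions are compatible, which they are since $0$ separates them by construction. I would therefore state the proof in one line, citing Lemmas \ref{inf} and \ref{zer}.

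\begin{proof}
Let $u$ be a solution of \eqref{dir}. By Lemma \ref{zer} we have $\lambda_1(a_0)<0$, and by Lemma \ref{inf} we have $\lambda_1(a_\infty)>0$. Combining the two inequalities yields $\lambda_1(a_0)<0<\lambda_1(a_\infty)$.
\end{proof}
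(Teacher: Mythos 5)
Your proof is correct and coincides with the paper's: Proposition \ref{nec} is stated there as a direct summary of Lemmas \ref{inf} and \ref{zer}, with no additional argument needed. Your recap of where the real work sits (the Picone/comparison argument in Lemma \ref{inf}, the simple test with $u$ in Lemma \ref{zer}) also matches the paper's proofs of those lemmas.
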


\section{Existence/II: sufficient condition}\label{sec5}

The most delicate part of this study consists in proving that $\lambda_1(a_0)<0<\lambda_1(a_\infty)$ implies existence of a solution to \eqref{dir}. Following \cite{BO,DS}, we use a variational approach. Recalling the extended definition of $f$, set for all $(x,t)\in\Omega\times\R$
\[F(x,t) = \int_0^t f(x,\tau)\,d\tau,\]
and for all $u\in\w$
\[\Phi(u) = \frac{\|u\|^p}{p}-\int_\Omega F(x,u)\,dx.\]
Under hypotheses ${\bf H}$, and especially due to the lack of a growth condition on $f(x,\cdot)$ from below, we cannot expect $\Phi$ to be G\^ateaux differentiable, in fact $\Phi$ is not even continuous in $\w$. So, in the following lemmas we will explore the properties of $\Phi$:

\begin{lemma}\label{lsc}
Let ${\bf H}$ hold. Then, $\Phi:\w\to\R\cup\{\infty\}$ is sequentially weakly l.s.c.
\end{lemma}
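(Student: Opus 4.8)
The plan is to prove that $\Phi$ is sequentially weakly lower semicontinuous by splitting it into the sum of the "good" part $u\mapsto\|u\|^p/p$, which is convex and strongly continuous on $\w$ and hence sequentially weakly l.s.c., and the reaction part $u\mapsto -\int_\Omega F(x,u)\,dx$, for which I need to show sequential weak upper semicontinuity of $u\mapsto\int_\Omega F(x,u)\,dx$, i.e.\ that $u_n\rightharpoonup u$ in $\w$ forces $\limsup_n\int_\Omega F(x,u_n)\,dx\le\int_\Omega F(x,u)\,dx$. The point is that $F$ has no lower bound uniform in $t$ (only the upper growth control from ${\bf H}$\,(\ref{h2})), so the integral can legitimately drop in the limit, which is exactly why we get upper (not lower) semicontinuity of this term, hence l.s.c.\ for $\Phi$.

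The key steps, in order: first, by compactness of the embedding $\w\hookrightarrow L^p(\Omega)$, from $u_n\rightharpoonup u$ in $\w$ I extract (for any subsequence along which $\limsup$ is realized) a further subsequence with $u_n\to u$ in $L^p(\Omega)$ and $u_n\to u$ a.e.\ in $\Omega$, and also $|u_n|\le h$ a.e.\ for some $h\in L^p(\Omega)$. Second, by the Carathéodory property of $f$, $F(x,\cdot)$ is continuous, so $F(x,u_n(x))\to F(x,u(x))$ a.e. Third, I control $F$ from above using ${\bf H}$\,(\ref{h2}): integrating $f(x,\tau)\le c_0(1+\tau^{p-1})$ on $[0,t]$ for $t\ge0$ gives $F(x,t)\le c_0(t+t^p/p)$, while for $t<0$ the extended definition $f(x,\tau)=f(x,0)$ together with ${\bf H}$\,(\ref{h1}) gives $F(x,t)=f(x,0)t\le \|f(\cdot,0)\|_\infty|t|$; in all cases $F(x,u_n(x))\le C(h(x)+h(x)^p)\in L^1(\Omega)$. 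Fourth, apply the reverse Fatou lemma (Fatou applied to $C(h+h^p)-F(x,u_n)$) to conclude $\limsup_n\int_\Omega F(x,u_n)\,dx\le\int_\Omega F(x,u)\,dx$; since this holds along every subsequence of the original $\limsup$-realizing subsequence with the same bound, it holds for the full original sequence. Adding the weak l.s.c.\ of $\|\cdot\|^p/p$ gives $\Phi(u)\le\liminf_n\Phi(u_n)$, and the value $+\infty$ is allowed precisely because $\int_\Omega F(x,u)\,dx$ may be $-\infty$ while the upper bound keeps $\limsup\int F(x,u_n)$ finite or $-\infty$, so the inequality is still meaningful.

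The main obstacle — really the only subtle point — is handling the possible unboundedness of $-\int_\Omega F(x,u)\,dx$ from above (i.e.\ $\Phi$ taking the value $+\infty$) and making sure the reverse Fatou argument is applied correctly: one must integrate the upper growth bound to get an $L^1$ dominating function from above for the sequence $F(x,u_n)$ (not a two-sided bound, which ${\bf H}$ does not provide), and then the inequality $\int F(x,u)\le C(h+h^p)$ need not be an equality of finite quantities — $\int_\Omega F(x,u)\,dx$ could be $-\infty$, which is fine and consistent with $\Phi(u)=+\infty$. I would also remark that by ${\bf H}$\,(\ref{h3}) one has $f(x,t)/t^{p-1}$ decreasing, so $a_\infty(x)\le f(x,t)/t^{p-1}$ for all $t>0$, giving the complementary lower control $F(x,t)\ge a_\infty(x)t^p/p$ when $a_\infty(x)>-\infty$; this is not needed for l.s.c.\ but clarifies why the functional is genuinely one-sided.
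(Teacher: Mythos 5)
Your proof is correct and follows essentially the same route as the paper's: weak lower semicontinuity of $u\mapsto\|u\|^p/p$, combined with the one-sided bound $F(x,t)\le C(1+|t|^p)$ obtained from ${\bf H}$\,(\ref{h2}) and the extension of $f$, a subsequence with a.e.\ convergence and an $L^p$ dominating function, and reverse Fatou to get $\limsup_n\int_\Omega F(x,u_n)\,dx\le\int_\Omega F(x,u)\,dx$. Your extra care with the $\limsup$-realizing subsequence and with the possibility $\int_\Omega F(x,u)\,dx=-\infty$ is consistent with (and slightly more explicit than) the paper's argument.
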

\begin{proof}
By ${\bf H}$  (\ref{h2}) we have for a.e.\ $x\in\Omega$ and all $t\in\R^+$
\[F(x,t) \le \int_0^t c_0(1+\tau^{p-1})\,d\tau = c_0\Big(t+\frac{t^p}{p}\Big),\]
while for all $t\in\R^-$ we have by ${\bf H}$  (\ref{h1}) and \eqref{wmp2}
\[F(x,t) = \int_0^t f(x,0)\,d\tau \le 0.\]
The estimates above imply for a.e.\ $x\in\Omega$ and all $t\in\R$
\beq\label{lsc1}
F(x,t) \le C(1+|t|^p).
\eeq
By \eqref{lsc1} and the continuous embedding $\w\hookrightarrow L^p(\Omega)$, we have $\Phi(u)>-\infty$ for all $u\in\w$. Now let $(u_n)$ be a sequence in $\w$ s.t.\ $u_n\rightharpoonup u$ in $\w$. Then
\[\liminf_n \frac{\|u_n\|^p}{p} \ge \frac{\|u\|^p}{p}.\]
Passing if necessary to a subsequence, we may assume that $u_n\to u$ in $L^p(\Omega)$ (by the compact embedding $\w\hookrightarrow L^p(\Omega)$), that $u_n(x)\to u(x)$ for a.e.\ $x\in\Omega$ and there is $g\in L^p(\Omega)$ s.t.\ $|u_n|\le g$ in $\Omega$ for all $n\in\N$. By continuity of the Nemytzskii operator, we have for a.e.\ $x\in\Omega$
\[\lim_n F(x,u_n(x)) = F(x,u(x)).\]
Besides, by \eqref{lsc1} we have for all $n\in\N$ and a.e.\ $x\in\Omega$
\[F(x,u_n(x)) \le C(1+g(x)^p).\]
So we can apply Fatou's lemma and find
\[\limsup_{n} \int_\Omega F(x,u_n)\,dx \le \int_\Omega F(x,u)\,dx.\]
Thus,
\[\liminf_{n} \Phi(u_n) \ge \Phi(u),\]
which proves that $\Phi$ is sequentially weakly l.s.c.
\end{proof}

The behavior of $\Phi$ at infinity is governed by $a_\infty$:

\begin{lemma}\label{coe}
Let ${\bf H}$ hold and $\lambda_1(a_\infty)>0$. Then, $\Phi$ is coercive in $\w$.
\end{lemma}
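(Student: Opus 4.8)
The plan is to argue by contradiction, following the scheme of Brezis--Oswald: assume $\Phi$ is not coercive, so that there is a sequence $(u_n)$ in $\w$ with $\|u_n\|\to\infty$ and $\Phi(u_n)\le C$ for all $n$, and derive a contradiction with $\lambda_1(a_\infty)>0$. First I would reduce to nonnegative functions. Since $F(x,\cdot)\le 0$ on $\R^-$ (as observed in the proof of Lemma \ref{lsc}), while $F(x,u_n)=F(x,u_n^+)$ on $\{u_n\ge 0\}$ and $F(x,u_n)\le 0=F(x,u_n^+)$ on $\{u_n<0\}$, one has $\int_\Omega F(x,u_n)\,dx\le\int_\Omega F(x,u_n^+)\,dx$; together with the submodularity inequality \eqref{sub} applied to $u_n$ and $0$, namely $\|u_n^+\|^p+\|u_n^-\|^p\le\|u_n\|^p$, this gives
\[\Phi(u_n)\ge\Phi(u_n^+)+\frac{\|u_n^-\|^p}{p}.\]
Since $\Phi$ is bounded below on bounded subsets of $\w$ by \eqref{lsc1}, if $\|u_n^+\|$ stayed bounded along a subsequence then $\|u_n^-\|\ge\|u_n\|-\|u_n^+\|\to\infty$ would force $\Phi(u_n)\to\infty$; hence $\|u_n^+\|\to\infty$, and as $\Phi(u_n^+)\le\Phi(u_n)\le C$ I may replace $u_n$ by $u_n^+$ and assume $u_n\ge 0$.

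Next I would normalise: put $w_n=u_n/\|u_n\|$, so that, up to a subsequence, $w_n\rightharpoonup w\ge 0$ in $\w$, $w_n\to w$ in $L^p(\Omega)$ and a.e.\ in $\Omega$, with $|w_n|\le g$ in $\Omega$ for some $g\in L^p(\Omega)$. Dividing $\Phi(u_n)\le C$ by $\|u_n\|^p$ yields
\[\liminf_n\int_\Omega\frac{F(x,u_n)}{\|u_n\|^p}\,dx\ge\frac1p.\]
From ${\bf H}$ (\ref{h2}) one has $F(x,t)\le c_0\big(t+\tfrac{t^p}{p}\big)$ for $t\ge 0$, hence $\frac{F(x,u_n)}{\|u_n\|^p}\le\frac{c_0 w_n}{\|u_n\|^{p-1}}+\frac{c_0}{p}w_n^p$; integrating and letting $n\to\infty$ I would deduce $\tfrac1p\le\tfrac{c_0}{p}\|w\|_p^p$, so $w\neq 0$. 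The same bound gives $\frac{F(x,u_n)}{\|u_n\|^p}\le c_0 g+\frac{c_0}{p}g^p\in L^1(\Omega)$ for $n$ large, which supplies the domination needed to pass to the limit.

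The crucial step is the pointwise asymptotics of $F$. Using the strict monotonicity ${\bf H}$ (\ref{h3}), the map $\tau\mapsto f(x,\tau)/\tau^{p-1}$ decreases to $a_\infty(x)$, and I would show that for a.e.\ $x\in\Omega$ one has $F(x,t)=\tfrac{a_\infty(x)}{p}t^p+o(t^p)$ as $t\to\infty$ (splitting the integral at a level $S=S(x,\eps)$ above which $f(x,\tau)/\tau^{p-1}\le a_\infty(x)+\eps$; the case $a_\infty(x)=-\infty$ is handled similarly and is only favorable). Since $u_n(x)=\|u_n\|w_n(x)\to\infty$ on $\{w>0\}$, this gives $\frac{F(x,u_n(x))}{\|u_n\|^p}\to\tfrac{a_\infty(x)}{p}w(x)^p$ there, while on $\{w=0\}$ the growth bound yields $\limsup_n\frac{F(x,u_n(x))}{\|u_n\|^p}\le 0$. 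Applying Fatou's lemma in the limsup form (legitimate by the $L^1$-domination above),
\[\frac1p\le\liminf_n\int_\Omega\frac{F(x,u_n)}{\|u_n\|^p}\,dx\le\int_\Omega\limsup_n\frac{F(x,u_n)}{\|u_n\|^p}\,dx\le\frac1p\int_{\{w\neq 0\}}a_\infty(x)\,w^p\,dx,\]
whence $\int_{\{w\neq 0\}}a_\infty(x)w^p\,dx\ge 1$. On the other hand $\|w\|^p\le\liminf_n\|w_n\|^p=1$, so $\|w\|^p-\int_{\{w\neq 0\}}a_\infty(x)w^p\,dx\le 0$, contradicting $\|w\|^p-\int_{\{w\neq 0\}}a_\infty(x)w^p\,dx\ge\lambda_1(a_\infty)\|w\|_p^p>0$, which holds by \eqref{la1} since $w\neq 0$ and $\lambda_1(a_\infty)>0$. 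This contradiction shows that $\Phi$ is coercive. I expect the main obstacle to be the pointwise limit $F(x,t)/t^p\to a_\infty(x)/p$ combined with a domination strong enough to interchange limit and integral — exactly the point where the strict monotonicity ${\bf H}$ (\ref{h3}) is used and where the possible value $a_\infty=-\infty$ on a non-null set must be accommodated.
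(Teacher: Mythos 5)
Your proof is correct, and it follows the same broad contradiction scheme as the paper (normalize a minimizing-type sequence, extract a weak limit, pass to the limit in $\int F(x,u_n)/\rho_n^p$ via the asymptotics $F(x,t)/t^p\to a_\infty(x)/p$ and a dominated/reverse-Fatou argument, then contradict $\lambda_1(a_\infty)>0$ through the Rayleigh quotient \eqref{la1}), but with two genuine technical deviations. First, you dispose of the negative parts at the outset, using the submodularity inequality \eqref{sub} with $v=0$ and the sign of $F(x,\cdot)$ on $\R^-$ to replace $u_n$ by $u_n^+$; the paper instead keeps signed functions and handles the negative contributions later by splitting $\int F(x,\rho_nv_n)/\rho_n^p$ into three integrals ($I_n^1,I_n^2,I_n^3$ over $\{v>0\}$, $\{v\le 0\}$, $\{v_n\le 0\}$). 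Your reduction makes the limit passage and the endgame cleaner (you contradict $\lambda_1(a_\infty)>0$ directly, whereas the paper must first deduce $v\le 0$ from the $v^+$-Rayleigh quotient and only then $v=0$). Second, you normalize by the Gagliardo norm $\|u_n\|$ rather than by $\|u_n\|_p$ as the paper does; the price is that $\|w\|_p=1$ is no longer automatic and you must separately show $w\neq 0$, which you do correctly from the one-sided growth bound ${\bf H}$ (\ref{h2}) (the paper instead needs the preliminary observation that $\|u_n\|_p\to\infty$, obtained from the same bound). Both routes use exactly the same ingredients — the upper bound \eqref{lsc1} for domination from above, the monotonicity ${\bf H}$ (\ref{h3}) for the pointwise limit of $F(x,t)/t^p$ (de l'H\^opital in the paper, a direct splitting of the integral in your version, which are equivalent), and the tolerance of $a_\infty=-\infty$ on a non-null set, which as you note only helps. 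In short: same strategy, different and arguably tidier bookkeeping; no gaps.
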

\begin{proof}
Using ${\bf H}$  (\ref{h3}) and de l'H\^opital's rule, we see that for a.e.\ $x\in\Omega$
\beq\label{coe1}
\lim_{t\to+\infty}\frac{F(x,t)}{t^p} = \frac{a_\infty(x)}{p}.
\eeq
We aim at proving that
\[\lim_{\|u\|\to\infty}\Phi(u) = +\infty.\]
Arguing by contradiction, let $(u_n)$ be a sequence in $\w$ s.t.\ $\|u_n\|\to\infty$ and $\Phi(u_n)\le C$ for some $C\in \R$ and for all $n\in\N$. Then, by \eqref{lsc1} we have for all $n\in\N$
\[\frac{\|u_n\|^p}{p} \le C+\int_\Omega F(x,u_n)\,dx \le C(1+\|u_n\|_p^p).\]
So, we have that $\|u_n\|_p\to\infty$, as well. Now, for all $n\in\N$ set
\[\rho_n=\|u_n\|_p, \ v_n=\frac{u_n}{\rho_n}\in\w.\]
Then clearly $\rho_n\to\infty$ and $\|v_n\|_p=1$ for all $n\in\N$. As above we have for all $n\in\N$
\[\frac{\|v_n\|^p}{p} \le C\frac{1+\rho_n^p}{\rho_n^p} \le C,\]
so $(v_n)$ is bounded in $\w$. By the reflexivity of $\w$ and the compact embedding $\w\hookrightarrow L^p(\Omega)$, possibly passing to a subsequence, we have that $v_n\rightharpoonup v$ in $\w$ and $v_n\to v$ in $L^p(\Omega)$. Hence, we have in particular
\beq\label{coe2}
\|v\|_p = 1.
\eeq
Passing if necessary to a further subsequence, we have $v_n(x)\to v(x)$ for a.e.\ $x\in\Omega$, with dominated convergence in $L^p(\Omega)$. We claim that
\beq\label{coe3}
\limsup_n \int_\Omega\frac{F(x,\rho_nv_n)}{\rho_n^p}\,dx \le \int_{\{v>0\}}\frac{a_\infty(x)v^p}{p}\,dx.
\eeq
Indeed, for all $n\in\N$ we have
\begin{align*}
\int_\Omega\frac{F(x,\rho_nv_n)}{\rho_n^p}\,dx &= \int_{\{v>0\}}\frac{F(x,\rho_nv^+_n)}{\rho_n^p}\,dx+\int_{\{v\le 0\}}\frac{F(x,\rho_nv_n^+)}{\rho_n^p}\,dx+\int_{\{v_n\le 0\}}\frac{F(x,\rho_nv_n)}{\rho_n^p}\,dx \\
&= I^1_n+I^2_n+I^3_n.
\end{align*}
We consider the three integrals separately:
\begin{itemize}[leftmargin=1cm]
\item[$(a)$] In the set $\{v>0\}$ we have a.e.\ $v_n>0$ for all $n\in\N$ big enough, so by \eqref{coe1} and $\rho_n\to\infty$ we get
\[\lim_n\frac{F(x,\rho_nv_n)}{\rho_n^p} = \lim_n\frac{F(x,\rho_nv_n)}{(\rho_nv_n)^p}\,v_n^p = \frac{a_\infty(x)v^p}{p}.\]
Thus, by applying Fatou's lemma, as in Lemma \ref{lsc}, we have
\[\limsup_n I^1_n \le  \int_{\{v>0\}}\frac{a_\infty(x)v^p}{p}\,dx.\]
\item[$(b)$] In $\{v\le 0\}$ we have $v_n^+\to 0$, with dominated convergence in $L^p(\Omega)$, so
\[\lim_n\int_{\{v\le 0\}}(v_n^+)^p\,dx = 0,\]
which in turn implies, along with \eqref{lsc1},
\begin{align*}
\limsup_n I^2_n &\le C\limsup_n\int_{\{v\le 0\}}\frac{1+(\rho_nv_n^+)^p}{\rho_n^p}\,dx \\
&\le C\lim_n\Big(\frac{1}{\rho_n^p}+\int_{\{v\le 0\}}(v_n^+)^p\,dx\Big) = 0.
\end{align*}
\item[$(c)$] Finally,  in $\{v_n\le 0\}$ for all $n\in\N$ we have
\[F(x,\rho_nv_n) = f(x,0)\rho_nv_n,\]
so by \eqref{wmp2}
\[\limsup_n I^3_n \le \limsup_n\int_{\{v_n\le 0\}}\frac{f(x,0)v_n}{\rho_n^{p-1}}\,dx \le 0.\]
\end{itemize}
Adding up the relations above, we find \eqref{coe3}. Now, by $v_n\rightharpoonup v$ in $\w$ and \eqref{coe3} we have
\begin{align}\label{coe4}
\frac{\|v\|^p}{p} &\le \liminf_n\frac{\|v_n\|^p}{p} \\
\nonumber &\le \limsup_n \frac{1}{\rho_n^p}\Big(\Phi(u_n)+\int_\Omega F(x,u_n)\,dx\Big) \\
\nonumber &\le \limsup_n\Big(\frac{C}{\rho_n^p}+\int_\Omega \frac{F(x,\rho_nv_n)}{\rho_n^p}\,dx\Big) \\
\nonumber &\le \int_{\{v>0\}}\frac{a_\infty(x)v^p}{p}\,dx.
\end{align}
Recalling \eqref{la1} (with $a=a_\infty$), by the inequality above and by the fact that $\|v^+\|\leq \|v\|$, we have
\[\lambda_1(a_\infty)\|v^+\|_p^p \le \|v^+\|^p-\int_{\{v>0\}}a_\infty(x)v^p\,dx \le 0.\]
Since by assumption $\lambda_1(a_\infty)>0$, we have $v\le 0$ in $\Omega$. But then again, by \eqref{coe4}, we have 
\[\|v\|^p \le \int_{\{v>0\}}a_\infty(x)v^p\,dx = 0.\]
So $v=0$, against \eqref{coe2}. This contradiction proves that $\Phi$ is coercive in $\w$.
\end{proof}

On the other hand, $a_0$ determines the behavior of $\Phi$ near the origin:

\begin{lemma}\label{min}
Let ${\bf H}$ hold and $\lambda_1(a_0)<0$. Then, there exists $\bar u\in\w$ s.t.\ $\Phi(\bar u)<0$.
\end{lemma}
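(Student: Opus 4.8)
The plan is to exhibit a bounded, nonnegative $v\in\w\setminus\{0\}$ making the quotient in \eqref{la1} negative, and then to check that $\Phi(tv)<0$ for $t>0$ small, so that $\bar u:=tv$ works.

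Since $\lambda_1(a_0)<0$, by \eqref{la1} there is $v_0\in\w\setminus\{0\}$; replacing $v_0$ by $|v_0|$ (which does not increase the Gagliardo seminorm) we may take $v_0\ge 0$, so that $\|v_0\|^p-\int_{\{v_0>0\}}a_0(x)v_0^p\,dx<0$, the integral being well defined in $(-\infty,+\infty]$ since $a_0\ge -C$. I would then truncate: set $v_k=v_0\wedge k$ for $k\in\N$, so $v_k\in\w\cap L^\infty(\Omega)$, $v_k\ge 0$, and $v_k\to v_0$ in $\w$ (by dominated convergence in the Gagliardo double integral, as $t\mapsto t\wedge k$ is $1$-Lipschitz and $\{v_0>k\}$ decreases to a null set). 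Since $v_k^p\uparrow v_0^p$ pointwise and $\{v_k>0\}=\{v_0>0\}$, writing $a_0=(a_0+C)-C$ with $a_0+C\ge 0$ and applying monotone convergence to the first summand and dominated convergence to the second yields $\int_{\{v_0>0\}}a_0 v_k^p\,dx\to\int_{\{v_0>0\}}a_0 v_0^p\,dx$ in $(-\infty,+\infty]$. Hence for $k$ large the function $v:=v_k\in\w\cap L^\infty(\Omega)$, $v\ge 0$, $v\ne 0$, satisfies $\|v\|^p-\int_{\{v>0\}}a_0(x)v^p\,dx<0$. (This truncation is what lets us bypass the possibly supercritical growth of $f$ from below, as well as the possibility that $a_0=+\infty$ on a non-null set.)

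Next, put $\delta=\|v\|_\infty$ and let $C_\delta>0$ be as in \eqref{wmp1}; in particular $a_0\ge-C_\delta$ in $\Omega$, since $a_0(x)=\lim_{t\to0^+}f(x,t)/t^{p-1}$ and \eqref{wmp1} bounds this quotient from below by $-C_\delta$ on $(0,\delta]$. Integrating the monotonicity of $\tau\mapsto f(x,\tau)/\tau^{p-1}$ against $\tau^{p-1}$ on $(0,s)$, for $0<s\le\delta$ we obtain
\[\frac{1}{p}\,\frac{f(x,s)}{s^{p-1}}\le\frac{F(x,s)}{s^p}\le\frac{a_0(x)}{p},\]
so, squeezing (also when $a_0(x)=+\infty$), $F(x,tv(x))/(tv(x))^p\to a_0(x)/p$ for a.e.\ $x\in\{v>0\}$ as $t\to0^+$, while for $t\in(0,1]$ these same inequalities give $\frac{F(x,tv)}{(tv)^p}\,v^p\ge-\frac{C_\delta}{p}v^p$ in $\{v>0\}$, with $v^p\in L^1(\Omega)$; together with \eqref{lsc1} this makes $\Phi(tv)=\frac{t^p\|v\|^p}{p}-\int_{\{v>0\}}\frac{F(x,tv)}{(tv)^p}(tv)^p\,dx$ a well-defined real number. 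Since $\frac{a_0}{p}v^p+\frac{C_\delta}{p}v^p\ge0$, Fatou's lemma gives $\liminf_{t\to0^+}\int_{\{v>0\}}\frac{F(x,tv)}{(tv)^p}v^p\,dx\ge\int_{\{v>0\}}\frac{a_0(x)}{p}v^p\,dx$, whence
\[\limsup_{t\to0^+}\frac{\Phi(tv)}{t^p}\le\frac{\|v\|^p}{p}-\int_{\{v>0\}}\frac{a_0(x)}{p}v^p\,dx=\frac{1}{p}\Big(\|v\|^p-\int_{\{v>0\}}a_0(x)v^p\,dx\Big)<0\]
(the right-hand side being $-\infty$ if the integral diverges). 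Thus $\Phi(tv)<0$ for all sufficiently small $t>0$, and $\bar u:=tv$ is the desired function. The main obstacle is securing an $L^1$ lower bound on $\frac{F(x,tv)}{(tv)^p}v^p$ uniform for $t$ near $0$, which is exactly why $v$ must first be truncated to be bounded; after that, \eqref{wmp1} supplies the bound and Fatou's lemma applies.
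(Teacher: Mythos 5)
Your proof is correct and follows essentially the same route as the paper: pick a bounded, nonnegative admissible function with negative Rayleigh-type quotient for $a_0$, then use the lower bound \eqref{wmp1} together with Fatou's lemma to show $\Phi(tv)<0$ for small $t>0$. The only differences are cosmetic refinements — you make the paper's ``density argument'' explicit via the truncations $v_0\wedge k$, and you obtain the limit $F(x,s)/s^p\to a_0(x)/p$ by a squeeze from the monotonicity in ${\bf H}$ (\ref{h3}) rather than by de l'H\^opital's rule — both of which are sound.
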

\begin{proof}
Using ${\bf H}$  (\ref{h3}) and de l'H\^opital's rule, we see that for a.e.\ $x\in\Omega$
\beq\label{min1}
\lim_{t\to 0^+}\frac{F(x,t)}{t^p} = \frac{a_0(x)}{p}.
\eeq
By \eqref{la1}, we can find $v\in\w$ s.t.\
\beq\label{min2}
\|v\|^p-\int_{\{v\neq 0\}}a_0(x)|v|^p\,dx < 0.
\eeq
By a density argument, and replacing if necessary $v$ with $|v|$, we may assume $v\in L^\infty(\Omega)_+$. By \eqref{min1}, in $\{v>0\}$ we have for a.e.\ $x\in\Omega$
\[\lim_{\eps\to 0^+}\frac{F(x,\eps v)}{\eps^p} = \frac{a_0(x)v^p}{p}.\]
Besides, given $\delta>\|v\|_\infty$, by \eqref{wmp1}, we have in $\{v>0\}$ and for all $\eps\in(0,1)$
\[\frac{F(x,\eps v)}{\eps^p} = \int_0^{\eps v}\frac{f(x,t)}{\eps^p}\,dt \ge -\frac{C_\delta\|v\|_\infty^p}{p}.\]
So we can apply Fatou's lemma and \eqref{min2} and find
\[\liminf_{\eps\to 0^+}\int_{\{v>0\}}\frac{F(x,\eps v)}{\eps^p}\,dx \ge \int_{\{v>0\}}\frac{a_0(x)v^p}{p}\,dx > \frac{\|v\|^p}{p}.\]
Then, for all $\eps>0$ small enough we have
\[\int_{\{v>0\}}\frac{F(x,\eps v)}{\eps^p}\,dx > \frac{\|v\|^p}{p}.\]
Now we set $\bar u=\eps v\in\w$ and compute
\begin{align*}
\Phi(\bar u) &= \frac{\eps^p\|v\|^p}{p}-\int_{\{v>0\}} F(x,\eps v)\,dx \\
&= \eps^p\Big(\frac{\|v\|^p}{p}-\int_{\{v>0\}}\frac{F(x,\eps v)}{\eps^p}\,dx\Big) < 0,
\end{align*}
thus concluding.
\end{proof}

\begin{remark}\label{str}
In Lemmas \ref{lsc}, \ref{coe}, and \ref{min} above we did not use the {\it strict} monotonicity of ${\bf H}$  (\ref{h3}). In fact, all the results in this Section can be proved, with minor adjustments, under the weaker condition \eqref{wmp1} in place of ${\bf H}$  (\ref{h3}).
\end{remark}

The final step consists in proving the sufficient condition for existence:

\begin{proposition}\label{suf}
Let ${\bf H}$ hold and $\lambda_1(a_0)<0<\lambda_1(a_\infty)$. Then, \eqref{dir} has a solution.
\end{proposition}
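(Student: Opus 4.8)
The plan is to combine the direct method of the calculus of variations, applied to $\Phi$, with a truncation scheme, since $\Phi$ itself is not regular enough to yield a critical point directly.

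\emph{Step 1: a nontrivial, nonnegative minimizer of $\Phi$.} By Lemma \ref{lsc}, $\Phi\colon\w\to\R\cup\{\infty\}$ is sequentially weakly l.s.c.; by Lemma \ref{coe} (here we use $\lambda_1(a_\infty)>0$) it is coercive; since $\w$ is reflexive, $\Phi$ attains its infimum at some $\bar u\in\w$. By Lemma \ref{min} (here we use $\lambda_1(a_0)<0$) there is $w_0\in L^\infty(\Omega)_+$ with $\Phi(w_0)<0=\Phi(0)$, so $\Phi(\bar u)<0$ and $\bar u\neq 0$. Since $t\mapsto t^+$ is $1$-Lipschitz we have $\|\bar u^+\|\le\|\bar u\|$, while $F(x,t)\le 0=F(x,0)$ for $t\le 0$ (see the proof of Lemma \ref{lsc}); hence $\Phi(\bar u^+)\le\Phi(\bar u)$ and we may assume $\bar u\ge 0$ in $\Omega$. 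The obstruction is that $\Phi$ need not be G\^ateaux differentiable, so $\bar u$ cannot be shown to satisfy \eqref{wsl} by a direct variation; this is where the truncated problems enter.

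\emph{Step 2: truncated problems.} For $k\in\N$ set $f_k(x,t)=f(x,t\wedge k)$ for $t\ge 0$ and $f_k(x,t)=f(x,0)$ for $t<0$, with primitive $F_k(x,t)=\int_0^t f_k(x,\tau)\,d\tau$ and energy $\Phi_k(u)=\|u\|^p/p-\int_\Omega F_k(x,u)\,dx$. Each $f_k$ is a bounded Carath\'eodory function, so $\Phi_k\in C^1(\w)$; moreover $f_k$ still satisfies ${\bf H}$ (\ref{h1}), ${\bf H}$ (\ref{h2}), \eqref{wmp1} and \eqref{wmp2}, and $\lim_{t\to\infty}F_k(x,t)/t^p=0$. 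Repeating the proofs of Lemmas \ref{lsc} and \ref{coe} with $a_\infty$ replaced by $0$ (recall $\lambda_1(0)>0$), $\Phi_k$ is coercive and sequentially weakly l.s.c., hence has a global minimizer $u_k\in\w$. Being a critical point of $\Phi_k\in C^1(\w)$, $u_k$ solves $\fpl u_k=f_k(x,u_k)$ weakly in $\Omega$; testing with $-u_k^-$ and using \eqref{wmp2} for $f_k$ as in Lemma \ref{wmp}, $u_k\ge 0$. Finally, for $k\ge\|w_0\|_\infty$ one has $F_k(\cdot,w_0)=F(\cdot,w_0)$, so $\Phi_k(u_k)\le\Phi_k(w_0)=\Phi(w_0)<0$, whence $u_k\neq 0$; by Lemmas \ref{reg} and \ref{smp} applied to the bounded reaction $f_k$, $u_k\in C^\alpha(\overline\Omega)$ and $\ds/C_k\le u_k\le C_k\ds$ in $\Omega$ for some $C_k>1$.

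\emph{Step 3: a uniform a priori bound --- the main obstacle.} The heart of the proof is to show that $\{u_k\}$ is bounded in $\w$, hence --- by the regularity theory behind Lemma \ref{reg}, the ${\bf H}$ (\ref{h2}) bound being uniform in $k$ --- in $L^\infty(\Omega)$: there are $M_0>0$ and $k_0\in\N$ with $\|u_k\|_\infty\le M_0$ for all $k\ge k_0$. I would argue by contradiction. If $\|u_k\|\to\infty$ along a subsequence, then (using $\Phi_k(u_k)<0$ and the bound $F_k(x,t)\le C(1+|t|^p)$, with $C$ independent of $k$) also $\rho_k:=\|u_k\|_p\to\infty$; setting $v_k=u_k/\rho_k$, the sequence $(v_k)$ is bounded in $\w$, so up to a subsequence $v_k\rightharpoonup v$ in $\w$, $v_k\to v$ in $L^p(\Omega)$ and a.e., with $\|v\|_p=1$ and $v\ge 0$. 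Mimicking the proof of Lemma \ref{coe}, but now carefully estimating the contribution of the ``frozen'' region $\{u_k>k\}$, one obtains
\[\limsup_k\int_\Omega\frac{F_k(x,\rho_kv_k)}{\rho_k^p}\,dx\le\int_{\{v>0\}}\frac{a_\infty(x)\,v^p}{p}\,dx,\]
hence $\|v\|^p\le\int_{\{v>0\}}a_\infty(x)v^p\,dx$, which together with $\|v^+\|\le\|v\|$, $\|v\|_p=1$ and \eqref{la1} contradicts $\lambda_1(a_\infty)>0$. Controlling $\{u_k>k\}$ is precisely where the two tools advertised in Section \ref{sec1} are needed: the monotonicity ${\bf H}$ (\ref{h3}) lets one compare $F_k$ with $F$ beyond the truncation threshold, and the submodularity inequality \eqref{sub} (following \cite{GM}) lets one compare the energy of truncated functions with that of the original ones. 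An alternative is to build, for each $k$, a supersolution from the principal eigenfunction of a weighted problem \eqref{evp} with weight $\ge a_\infty$ and positive principal eigenvalue, and to invoke the weak comparison principle; in either case this is the most delicate point of the argument.

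\emph{Step 4: conclusion.} Fix $k\ge\max\{k_0,M_0\}$. Then $0\le u_k\le M_0\le k$ in $\Omega$, so $f_k(x,u_k)=f(x,u_k)$ and $u_k$ satisfies \eqref{wsl} with the original reaction $f$. Since $u_k\neq 0$, Lemmas \ref{reg} and \ref{smp} give $u_k>0$ in $\Omega$, so $u_k$ solves \eqref{dir}; by Proposition \ref{uni} it is the unique solution (and, incidentally, $u_k=\bar u$). This proves that $\lambda_1(a_0)<0<\lambda_1(a_\infty)$ is sufficient for the solvability of \eqref{dir}.
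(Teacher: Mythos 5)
Your Steps 1, 2 and 4 are sound, and the overall architecture (direct minimization of $\Phi$ plus a truncation scheme to circumvent its non-differentiability) is the same as the paper's. But Step 3 is a genuine gap, not a technicality: you never prove the uniform bound $\|u_k\|_\infty\le M_0$ on which everything else rests, and the estimate you assert in its place is not correct in general for your choice of truncation. With $f_k(x,t)=f(x,t\wedge k)$ one has $F_k(x,t)=F(x,k)+f(x,k)(t-k)$ for $t\ge k$, so on the frozen region, writing $\theta_k=k/\rho_k$,
\[\frac{F_k(x,\rho_kv_k)}{\rho_k^p}=\frac{F(x,k)}{k^p}\,\theta_k^p+\frac{f(x,k)}{k^{p-1}}\,\theta_k^{p-1}\big(v_k(x)-\theta_k\big)\longrightarrow a_\infty(x)\Big[\frac{\theta^p}{p}+\theta^{p-1}(v(x)-\theta)\Big]\]
whenever $\theta_k\to\theta\in(0,v(x))$ and $a_\infty(x)$ is finite. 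Since the bracket is \emph{strictly smaller} than $v(x)^p/p$, the pointwise limit exceeds $a_\infty(x)v(x)^p/p$ precisely when $a_\infty(x)<0$ --- which is the typical situation under ${\bf H}$ (e.g.\ equidiffusive logistic reactions). So your claimed inequality $\limsup_k\int_\Omega F_k(x,\rho_kv_k)\rho_k^{-p}\,dx\le\int_{\{v>0\}}a_\infty v^p/p\,dx$ can fail, and the contradiction with $\lambda_1(a_\infty)>0$ does not follow. The root of the problem is that your truncation threshold $k$ and the blow-up rate $\rho_k$ are completely decoupled, and neither ${\bf H}$ (\ref{h3}) nor \eqref{sub} (which you invoke only by name) repairs this. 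Your ``alternative'' via supersolutions built from weighted eigenfunctions runs into the boundary degeneracy of the eigenfunction and a comparison principle you have not established, so it is not a fix either.

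The paper avoids needing any uniform bound on the $u_k$ by a different design. It truncates the \emph{values} of $f$ from below, $f_k(x,t)=f(x,t^+)\vee(-k(t^+)^{p-1})$, which preserves the one-sided ordering $f_k\ge f$ and hence $F_k\ge F$ along the relevant integrals; it then proves $\lambda_1(a_0^k)<0<\lambda_1(a_\infty^k)$ for one sufficiently large $k$ (the delicate point being $\lambda_1(a_\infty^k)\to\lambda_1(a_\infty)$), minimizes the $C^1$ functional $\Phi_k$ to get a bounded solution $u_k$ of the truncated problem, and finally uses the submodularity inequality \eqref{sub} together with $\Phi_k(u_k)\le\Phi_k(u\vee u_k)$ and $f_k\ge f$ to show that $\Phi(u\wedge u_k)\le\Phi(u)$. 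Thus the \emph{original} minimizer may be replaced by $u\wedge u_k\in L^\infty(\Omega)$, after which the local bilateral growth bound \eqref{wmp1} makes $\Phi$ differentiable at the minimizer and \eqref{wsl} follows. If you want to keep your argument-truncation, you would have to supply a genuinely new proof of the uniform $L^\infty$ bound; as written, the proposal does not prove the proposition.
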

\begin{proof}
From Lemmas \ref{lsc}, \ref{coe} we know that $\Phi:\w\to\R\cup\{\infty\}$ is sequentially weakly l.s.c.\ and coercive. By reflexivity of $\w$, there exists $u\in\w$ s.t.\
\beq\label{suf1}
\Phi(u) = \inf_{v\in\w}\Phi(v).
\eeq
We may assume $u\ge 0$ in $\Omega$. Otherwise, we replace $u$ with $u^+\in\w$. Indeed, by \eqref{wmp2} we have
\begin{align*}
\Phi(u^+) &= \frac{\|u^+\|^p}{p}-\int_{\{u>0\}}F(x,u)\,dx \\
&\le \frac{\|u\|^p}{p}-\int_\Omega F(x,u)\,dx+\int_{\{u\le 0\}}f(x,0)u\,dx \le \Phi(u).
\end{align*}
Also, by Lemma \ref{min} we have
\[\Phi(u) \le \Phi(\bar u) < 0,\]
hence $u\in\w_+\setminus\{0\}$. Due to the lack of differentiability of $\Phi$, we cannot infer immediately that $u$ satisfies \eqref{wsl} and afterwards apply the regularity theory developed in Section \ref{sec2}. Instead, we prove independently that in \eqref{suf1} we may assume
\beq\label{suf2}
u\in L^\infty(\Omega).
\eeq
To this end, following \cite{BO}, we introduce a sequence of truncated reactions by setting for all $k\in\N$ and all $(x,t)\in\Omega\times\R$
\[f_k(x,t) = f(x,t^+)\vee(-k(t^+)^{p-1}).\]
By ${\bf H}$, it is immediately seen that $f_k:\Omega\times\R\to\R$ is a Carath\'eodory function for every $k\in \N$. By ${\bf H}$  (\ref{h1}) we have for all $t\in\R$
\[f_k(\cdot,t) \in L^\infty(\Omega).\]
Plus, by ${\bf H}$  (\ref{h2}) we have for a.e.\ $x\in\Omega$ and all $t\in\R^+$
\[-kt^{p-1} \le f_k(x,t) \le (c_0\vee k)(1+t^{p-1}),\]
while by \eqref{wmp2} we have for a.e.\ $x\in\Omega$ and all $t\in\R^-$
\[|f_k(x,t)| \le \|f(\cdot,0)\|_\infty.\]
In conclusion, for any $k\in\N$ we can find $c_k>0$ s.t.\ for a.e.\ $x\in\Omega$ and all $t\in\R$
\beq\label{suf3}
|f_k(x,t)| \le c_k(1+|t|^{p-1}).
\eeq
By ${\bf H}$  (\ref{h3}) we have for a.e.\ $x\in\Omega$ and all $0<t<t'$
\[\frac{f_k(x,t)}{t^{p-1}} = \frac{f(x,t)}{t^{p-1}}\vee(-k) \ge \frac{f(x,t')}{(t')^{p-1}}\vee(-k) = \frac{f_k(x,t')}{(t')^{p-1}},\]
i.e., the map
\[t\mapsto\frac{f_k(x,t)}{t^{p-1}}\]
is nonincreasing in $\R^+_0$, as well. Thus, $f_k$ satisfies ${\bf H}$ (but the strict monotonicity in ${\bf H}$ (\ref{h3})), and in addition the bilateral growth condition \eqref{suf3}. In addition, we have for all $k\in\N$, a.e.\ $x\in\Omega$, and all $t\in\R$ the following useful inequality:
\beq\label{suf4}
f_k(x,t) \ge f_{k+1}(x,t) \ge f(x,t).
\eeq
By monotonicity, for all $k\in \N$ we may define two measurable functions by setting for a.e.\ $x\in\Omega$ 
\[a_0^k(x) = \lim_{t\to 0^+}\frac{f_k(x,t)}{t^{p-1}}, \ a_\infty^k(x) = \lim_{t\to\infty}\frac{f_k(x,t)}{t^{p-1}}.\]
Some remarks on the sequences $(a_0^k)$, $(a_\infty^k)$ are now in order. First we focus on $(a_0^k)$. From \eqref{suf4} we have for a.e.\ $x\in\Omega$ and all $t\in\R^+_0$
\[\frac{f_k(x,t)}{t^{p-1}} \ge \frac{f(x,t)}{t^{p-1}}.\]
Passing to the limit as $t\to 0^+$ gives
\[a_0^k(x) \ge a_0(x),\]
hence in particular $a^k_0$ is bounded from below in $\Omega$. Now define $\lambda_1(a^k_0)$ as in \eqref{la1}. We have already seen (in Remark \ref{bda0}) that the map $a\mapsto\lambda_1(a)$ is nonincreasing, so by the main assumption we have for all $k\in\N$
\beq\label{suf5}
\lambda_1(a_0^k) \le \lambda_1(a_0) < 0.
\eeq
The case for $(a_\infty^k)$ is subtler. First note that, by \eqref{suf3}, we have $a_\infty^k\in L^\infty(\Omega)$ for all $k\in\N$. Also, dividing \eqref{suf4} by $t^{p-1}$ and then letting $t\to\infty$, we get for all $k\in\N$ and a.e.\ $x\in\Omega$
\beq\label{suf6}
a_\infty^k(x) \ge a_\infty^{k+1}(x) \ge a_\infty(x).
\eeq
In fact we have for a.e.\ $x\in\Omega$
\beq\label{suf7}
\lim_k a_\infty^k(x) = a_\infty(x).
\eeq
Indeed, by \eqref{suf6} the sequence $(a_\infty^k(x))$ is nonincreasing, hence the limit above exists, and in addition
\[\lim_k a_\infty^k(x) \ge a_\infty(x).\]
Now fix $M>a_\infty(x)$. We can find $T>0$ (depending on $x$) s.t.\ for all $t\ge T$
\[\frac{f(x,t)}{t^{p-1}} < M.\]
For any such $t$, choose $k\in\N$ s.t.\
\[k \ge \frac{\|f(\cdot,t)\|_\infty}{t^{p-1}},\]
so we have
\[\frac{f_k(x,t)}{t^{p-1}} = \frac{f(x,t)}{t^{p-1}} < M.\]
Finally, let $t\to\infty$ to get
\[a_\infty^k(x) \le M,\]
which completes the proof of \eqref{suf7}. The next step consists in proving that
\beq\label{suf8}
\lim_k \lambda_1(a_\infty^k) = \lambda_1(a_\infty) > 0.
\eeq
By \eqref{suf6} and the  monotonicity of $a\mapsto\lambda_1(a)$ (Remark \ref{bda0} again), we see that $(\lambda_1(a_\infty^k))$ is a nondecreasing sequence, hence the limit above exists. Still by \eqref{suf6} we have
\[\lim_k\lambda_1(a_\infty^k) \le \lambda_1(a_\infty).\]
Now, if $\lambda_1(a_\infty^k)\to\infty$, then $\lambda_1(a_\infty)=\infty$ and there is nothing to prove. So, let us assume that $(\lambda_1(a_\infty^k))$ is bounded from above. For all $k\in\N$ there exists $v_k\in\w$ s.t.\ $\|v_k\|_p=1$ and
\[\|v_k\|^p-\int_{\{v_k\neq 0\}}a_\infty^k(x)|v_k|^p\,dx < \lambda_1(a_\infty^k)+\frac{1}{k}.\]
By \eqref{suf3} and \eqref{suf6}, we can find $M>0$ s.t.\ $a_\infty^k\le M$ in $\Omega$, for all $k\in\N$. So the last inequality gives for all $k\in\N$
\begin{align*}
\|v_k\|^p &< \int_{\{v_k\neq 0\}}a_\infty^k(x)|v_k|^p\,dx+\lambda_1(a_\infty^k)+\frac{1}{k} \\
&\le M\|v_k\|_p^p+C \le C.
\end{align*}
Therefore, $(v_k)$ is bounded in $\w$. By reflexivity and the compact embedding $\w\hookrightarrow L^p(\Omega)$, passing to a subsequence if necessary, we have $v_k\rightharpoonup v$ in $\w$ and $v_k\to v$ in $L^p(\Omega)$. Hence in particular $\|v\|_p=1$. By weak convergence we have
\[\liminf_k\|v_k\|^p \ge \|v\|^p.\]
Also, by \eqref{suf7}, $a_\infty^k(x)|v_k(x)|^p\to a_\infty(x)|v(x)|^p$ for a.e.\ $x\in\Omega$ with dominated convergence from above, so by Fatou's lemma
\[\limsup_k\int_{\{v_k\neq 0\}} a_\infty^k(x)|v_k|^p\,dx \le \int_{\{v\neq 0\}}a_\infty(x)|v|^p\,dx.\]
Therefore, using also \eqref{la1}, we have
\begin{align*}
\lambda_1(a_\infty) &\le \|v\|^p-\int_{\{v\neq 0\}}a_\infty(x)|v|^p\,dx \\
&\le \liminf_k\Big(\|v_k\|^p-\int_{\{v_k\neq 0\}}a_\infty^k(x)|v_k|^p\,dx\Big) \\
&\le \lim_k\Big(\lambda_1(a_\infty^k)+\frac{1}{k}\Big) = \lim_k\lambda_1(a_\infty^k),
\end{align*}
hence we have \eqref{suf8}.
\vskip2pt
\noindent
Now, by \eqref{suf5} and \eqref{suf8} we can fix $k\in\N$ s.t.\
\[\lambda_1(a_0^k) < 0 < \lambda_1(a_\infty^k).\]
Set for all $(x,t)\in\Omega\times\R$
\[F_k(x,t) = \int_0^t f_k(x,\tau)\,d\tau,\]
and for all $v\in\w$
\[\Phi_k(v) = \frac{\|v\|^p}{p}-\int_\Omega F_k(x,v)\,dx.\]
Arguing as above and applying Lemmas \ref{lsc}, \ref{coe}, and \ref{min} (recalling Remark \ref{str}), we find $u_k\in\w_+\setminus\{0\}$ s.t.\
\beq\label{suf9}
\Phi_k(u_k) = \inf_{v\in\w}\Phi_k(v).
\eeq
But now, by \eqref{suf3} we have $\Phi_k\in C^1(\w)$ with derivative given for all $v,\varphi\in\w$ by
\[\langle\Phi'_k(v),\varphi\rangle = \langle\fpl v,\varphi\rangle-\int_\Omega f_k(x,v)\varphi\,dx.\]
So we can differentiate in \eqref{suf9} and see that $u_k$ is a weak solution (in the sense of Section \ref{sec2}) of the following problem:
\[\begin{cases}
\fpl u_k = f_k(x,u_k) & \text{in $\Omega,$}\\
u_k = 0 & \text{in $\Omega^c$.}
\end{cases}\]
Reasoning as in Lemmas \ref{reg} and \ref{smp}, we see that $u_k\in C^\alpha(\overline\Omega)$ and there exists $C>1$ s.t.\ in $\Omega$
\[\frac{\ds}{C} \le u_k \le C\ds.\]
Define
\[\underline{u}_k = u\wedge u_k, \ \overline{u}_k = u\vee u_k.\]
By the lattice structure of $\w$ we have $\underline{u}_k,\overline{u}_k\in\w$. Also, since $0\le\underline{u}_k\le u_k$ in $\Omega$, we clearly have $\underline{u}_k\in L^\infty(\Omega)$. Now we claim that
\beq\label{suf10}
\Phi(\underline{u}_k) \le \Phi(u).
\eeq
Indeed, by \eqref{suf9} we have
\[\Phi_k(u_k) \le \Phi_k(\overline{u}_k).\]
By the inequality above and \eqref{suf4} we have
\begin{align*}
\frac{\|u_k\|^p}{p}-\frac{\|\overline{u}_k\|^p}{p} &\le \int_\Omega F_k(x,u_k)\,dx-\int_\Omega F_k(x,\overline{u}_k)\,dx \\
&= \int_{\{u>u_k\}}\big(F_k(x,u_k)-F_k(x,u)\big)\,dx \\
&= \int_{\{u>u_k\}}\int_u^{u_k} f_k(x,t)\,dt\,dx \\
&\le \int_{\{u>u_k\}}\int_u^{u_k} f(x,t)\,dt\,dx \\
&= \int_{\{u>u_k\}}\big(F(x,u_k)-F(x,u)\big)\,dx \\
&= \int_\Omega F(x,\underline{u}_k)\,dx-\int_\Omega F(x,u)\,dx.
\end{align*}
Besides, by the submodularity inequality \eqref{sub} we have
\[\frac{\|\underline{u}_k\|^p}{p}+\frac{\|\overline{u}_k\|^p}{p} \le \frac{\|u\|^p}{p}+\frac{\|u_k\|^p}{p}.\]
Concatenating the last relations we have
\[\frac{\|\underline{u}_k\|^p}{p}-\frac{\|u\|^p}{p} \le \int_\Omega F(x,\underline{u}_k)\,dx-\int_\Omega F(x,u)\,dx,\]
which is equivalent to \eqref{suf10}. Thus, replacing if necessary $u$ with $\underline{u}_k$ in \eqref{suf1}, we finally get \eqref{suf2}. By \eqref{wmp1} with $\delta>\|u\|_\infty$ and ${\bf H}$  (\ref{h2}), we have for a.e.\ $x\in\Omega$ and all $t\in[0,\delta]$
\[|f(x,t)| \le C(1+t^{p-1}).\]
Such local bilateral growth condition allows us to differentiate in \eqref{suf1}, thus finding that $u\in\w\setminus\{0\}$ satisfies \eqref{wsl}. Then we can apply Lemmas \ref{reg}, \ref{smp} and conclude that $u\in C^\alpha(\overline\Omega)$ and $u>0$ in $\Omega$, hence $u$ solves \eqref{dir}.
\end{proof}

\noindent
{\bf Conclusion.} Simply lining up Propositions \ref{uni}, \ref{nec}, and \ref{suf} we have the complete proof of Theorem \ref{main}.

\section*{Acknowledgements}
\noindent
Both authors are members of GNAMPA (Gruppo Nazionale per l'Analisi Matematica, la Probabilit\`a e le loro Applicazioni) of INdAM (Istituto Nazionale di Alta Matematica 'Francesco Severi'). The first author is supported by the research project {\it Analysis of PDE's in connection with real phenomena} (CUP F73C22001130007, Fondazione di Sardegna 2021). The second author is supported by the FFABR 'Fondo per il finanziamento delle attivit\`a base di ricerca' 2017 and INdAM-GNAMPA Project  2022 {\it PDE ellittiche a diffusione mista}. The authors wish to thank S.\ Mosconi for drawing their attention to the precious submodularity inequality \eqref{sub}.

\end{document}